\newtheorem{theorem}{Theorem}
\newaliascnt{proposition}{theorem}  
\newaliascnt{lemma}{theorem}  
\newtheorem{lemma}[lemma]{Lemma}
\newtheorem{corollary}[theorem]{Corollary}
\def\sectionautorefname~#1\null{\S#1\null}
\def\subsectionautorefname~#1\null{\S#1\null}
\def\equationautorefname~#1\null{(#1)\null}
\def\itemautorefname~#1\null{(#1)\null}
\newcommand{\GAP}{\textsf{GAP}} 
\newcommand{\CHEVIE}{\textsf{CHEVIE}}
\newcommand{\ZigZag}{\textsf{ZigZag}} 
\numberwithin{equation}{section}
\newcommand\mubar{\overline{\mu}}
\renewcommand\th{^{\text{th}}}
\newcommand\ibar{\overline{\imath}}
\newcommand\jbar{\overline{\jmath}}
\newcommand\BBC{\mathbb C}
\newcommand\BBN{\mathbb N}
\title[Computations for rank seven and eight Coxeter groups]
{Computations for Coxeter arrangements and Solomon's descent algebra
  III: Groups of rank seven and eight} 
\keywords{Coxeter group, Orlik-Solomon algebra, descent algebra}
\subjclass[2010]{20F55, 20C15, 20C40, 52C35}
\begin{document}
\author[M. Bishop]{Marcus Bishop} 
\address{Fakult\"at f\"ur
  Mathematik\\Ruhr-Universit\"at Bochum\\D-44780 Bochum, Germany}
\email{marcus.bishop@rub.de} 

\author[J. M. Douglass]{J. Matthew Douglass} 
\address{Department of Mathematics\\University of North Texas\\Denton
  TX, USA 76203} 
\email{douglass@unt.edu}

\author[G. Pfeiffer]{G\"otz Pfeiffer} 
\address{School of Mathematics, Statistics and Applied
  Mathematics\\National University of Ireland, Galway\\University
  Road, Galway, Ireland} 
\email{goetz.pfeiffer@nuigalway.ie}

\author[G. R\"ohrle]{Gerhard R\"ohrle} 
\address{Fakult\"at f\"ur Mathematik\\Ruhr-Universit\"at
  Bochum\\D-44780 Bochum, Germany} 
\email{gerhard.roehrle@rub.de}

\begin{abstract}
  In this paper we extend the computations in parts I and II of this series of 
  papers and complete the proof of a conjecture of Lehrer and Solomon expressing
  the character of a finite Coxeter group $W$ acting on the $p^{\text{th}}$
  graded component of its Orlik-Solomon algebra as a sum of characters
  induced from linear characters of centralizers of elements of $W$ for
  groups of rank seven and eight. For classical Coxeter groups, these
  characters are given using a formula that is expected to hold in all
  ranks.
\end{abstract}

\maketitle

\section{Introduction}

Suppose that $W$ is a finite Coxeter group and that $V$ is the complexified
reflection representation of $W$.  Let $\mathcal A$ be the set of reflecting
hyperplanes of $W$ in $V$ and let
\[
M=V\setminus \bigcup _{H\in\mathcal A} H
\]
denote the complement of these hyperplanes in $V$. The reflection length of
an element $w \in W$ is the least integer $p$ such that $w$ may be written
as a product of $p$ reflections. Clearly, conjugate elements have the same
reflection length. Lehrer and Solomon \cite[1.6]{lehrersolomon:symmetric}
conjectured that there is a $\mathbb CW$-module isomorphism
\begin{equation}
  \label{eq:lsconj}
  H^p(M, \mathbb C)\cong \bigoplus_c \operatorname{Ind}_{C_W(c)}^W
  \chi_c \qquad p=0, \dots, \mathrm{rank}(W)
\end{equation}
where $c$ runs over a set of representatives of the conjugacy classes of $W$
with reflection length equal $p$ and $\chi_c$ is a suitable linear
character of the centralizer $C_W(c)$ of $c$ in $W$. Lehrer and Solomon
proved~\autoref{eq:lsconj} for symmetric
groups. In~\cite{douglasspfeifferroehrle:inductive} we proposed an inductive
approach to the Lehrer-Solomon conjecture that establishes a direct
connection between the character of the Orlik-Solomon algebra of $W$ and the
regular character of $W$. This inductive approach has been used to prove
~\autoref{eq:lsconj} for dihedral
groups~\cite{douglasspfeifferroehrle:inductive}, symmetric groups
\cite{douglasspfeifferroehrle:cohomology}, and Coxeter groups with rank at
most six~\cite{bishopdouglasspfeifferroehrle:computations,
  bishopdouglasspfeifferroehrle:computationsII}.

In this paper we extend the computations
in~\cite{bishopdouglasspfeifferroehrle:computations,
  bishopdouglasspfeifferroehrle:computationsII} to finite Coxeter groups of
rank seven and eight and complete the proof of the conjectures
in~\cite{douglasspfeifferroehrle:inductive} that relate the Orlik-Solomon
and regular characters of these groups. As a consequence, the conjectures
in~\cite{douglasspfeifferroehrle:inductive}, as well as the Lehrer-Solomon
conjecture, are shown to hold for all finite Coxeter groups of rank at most
eight. In particular, these conjectures hold for all exceptional finite
Coxeter groups.

As described in more detail below, the proof for the Coxeter groups of type
$E_7$ and $E_8$ uses the techniques developed
in~\cite{bishopdouglasspfeifferroehrle:computations,
  bishopdouglasspfeifferroehrle:computationsII}, except that we use
Fleischmann and Janiszczak's
computation~\cite{fleischmannjaniszczak:combinatorics} of the M\"obius
functions of fixed point sets in the intersection lattice of $\mathcal A$ to
compute the character of the top component of the Orlik-Solomon algebra. We
take this opportunity to correct several minor misprints in the table for
$E_8$ in~\cite{fleischmannjaniszczak:combinatorics}. For
Coxeter groups of classical types in this paper we give explicit formulas
for the characters $\chi_c$ in all ranks and then use the methods developed
in~\cite{bishopdouglasspfeifferroehrle:computations,
  bishopdouglasspfeifferroehrle:computationsII} to verify
that~\autoref{eq:lsconj} holds for rank less than or equal eight. The
formulas for the characters $\chi_c$ given below are similar to the
formulas in \cite{lehrersolomon:symmetric}.

The rest of this paper is organized as follows. In \autoref{sec:state} we
establish notation, give a concise review of the constructions and
conjectures in~\cite{douglasspfeifferroehrle:inductive,
  bishopdouglasspfeifferroehrle:computations,
  bishopdouglasspfeifferroehrle:computationsII}, and state the main theorem
to be proved in this paper (\autoref{thm:1}); in \autoref{sec:class} we give
explicit constructions of the linear characters that we expect to satisfy
the conclusion of~\autoref{thm:1} for classical groups, and we verify that
these characters do indeed satisfy the conclusion of~\autoref{thm:1} for
groups of type $B_n$ and $D_n$ for $n\leq 8$; finally, in \autoref{sec:exc}
we give specific linear characters that satisfy the conclusion
of~\autoref{thm:1} for the exceptional groups of type $E_7$ and $E_8$, thus
completing the proof of the theorem.

\section{Statement of the main theorem}\label{sec:state}

We begin by summarizing the constructions
in~\cite{douglasspfeifferroehrle:inductive,
  bishopdouglasspfeifferroehrle:computations,
  bishopdouglasspfeifferroehrle:computationsII}. The reader is referred to
these sources for more details and proofs. Let $\left(W,S\right)$ be a
finite Coxeter system and let $A\left(W\right)$ be the Orlik-Solomon algebra
of $W$. The inductive strategy for proving~\autoref{eq:lsconj} proposed
in~\cite{douglasspfeifferroehrle:inductive} is to decompose the left regular
$\mathbb C W$-module and the Orlik-Solomon algebra of $W$ (considered as a
left $\mathbb C W$-module) into direct sums, and then relate the characters
of the individual summands. The decomposition of $\mathbb{C}W$ is given by a
set of orthogonal idempotents $\{\, e_\lambda\mid \lambda \in \Lambda\,\}$
constructed by Bergeron, Bergeron, Howlett and Taylor
\cite{bergeronbergeronhowletttaylor:decomposition}. Here $\Lambda$ denotes
the set of \emph{shapes} of $W$, that is, the set of subsets of $S$ modulo
the equivalence relation given by conjugacy in $W$.  Alternately, $\Lambda$
indexes the conjugacy classes of parabolic subgroups of $W$. For each subset
$L$ of $S$, Bergeron et al.~construct a quasi-idempotent $e_L$ in the descent
algebra of $W$ and define $e_\lambda=\sum_{L\in \lambda} e_L$. Denoting the
regular character of $W$ by $\rho$ and the character of $\mathbb{C}W
e_\lambda$ by $\rho_\lambda$, it follows that
\begin{equation}
  \label{eq:1a}
  \rho=\sum_{\lambda\in\Lambda}\rho_\lambda.
\end{equation}

The set of shapes $\Lambda$ also indexes the orbits of $W$ on the lattice of
$\mathcal A$ and a construction of Lehrer and Solomon
\cite[\S2]{douglasspfeifferroehrle:cohomology} yields a decomposition
$A\left(W\right) =\bigoplus_{ \lambda\in\Lambda} A_\lambda$.  Denoting the
Orlik-Solomon character of $W$ by $\omega$ and the character of $A_\lambda$
by $\omega_\lambda$, we have
\begin{equation}
  \label{eq:OmegaSum}
  \omega=\sum_{\lambda\in\Lambda}\omega_\lambda.
\end{equation}

Suppose that $L\subseteq S$.  Then $(W_L,L)$ is a Coxeter system and we may
consider the Orlik-Solomon algebra $A\left(W_L\right)$ of $W_L$. For
$J\subseteq L$ we denote by $e^L_J$ the quasi-idempotents in $\mathbb{C}W_L$
constructed in \cite{bergeronbergeronhowletttaylor:decomposition}. The
homogeneous component of $A\left( W_L \right)$ of highest degree is called
the {\em top component} of $A\left(W_L\right)$.  By analogy, the submodule
$\mathbb{C}W_L e_L^L$ of $\mathbb{C}W_L$ is called the {\em top component}
of $\mathbb{C}W_L$. The top components of $A\left(W_L\right)$ and
$\mathbb{C}W_L$ are $N_W(W_L)$-stable subspaces of $A\left(W\right)$ and
$\mathbb{C}W$, respectively. We denote their characters by
$\widetilde{\omega_L}$ and $\widetilde{\rho_L}$.  If $\lambda$ is in
$\Lambda$ and $L$ is in $\lambda$, then
\begin{equation}
  \label{eq:3a}
  \omega_\lambda=\operatorname{Ind}_{N_W\left(W_L\right)}^W
  \widetilde{\omega_L}\quad\text{and} \quad\rho_\lambda=
  \operatorname{Ind}_{N_W\left(W_L\right)}^W \widetilde{\rho_L}
\end{equation}
(see \cite[Proposition~4.8]{douglasspfeifferroehrle:cohomology}).  In order
to state our main result we need to recall two more definitions. First, an
element in $W_L$ is \emph{cuspidal} if it does not lie in any proper
parabolic subgroup of $W_L$. Second, for $n$ in $N_W(W_L)$, let
$\alpha_L(n)$ be the determinant of the restriction of $n$ to the space of
fixed points of $W_L$ in $V$. Note that $\alpha_L$ is a linear character of
$N_W\left(W_L\right)$.

\begin{theorem}\label{thm:1}
  Suppose that $\left(W,S\right)$ is a finite Coxeter system of rank at most
  eight. Let $L\subseteq S$ and suppose that $\mathcal{C}_L$ is a set of
  representatives of the cuspidal conjugacy classes of $W_L$. Then for each
  $w\in\mathcal{C}_L$ there exists a linear character $\varphi_w$ of
  $C_W\left(w\right)$ such that
  \[
  \widetilde{\rho_L}=\sum_{w\in\mathcal{C}_L}
  \operatorname{Ind}_{C_W(w)}^{N_W(W_L)} \varphi_w =\alpha_L\epsilon
  \widetilde{\omega_L},
  \]
  where $\epsilon$ is the sign character of $W$. 
\end{theorem}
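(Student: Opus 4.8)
\emph{Proof strategy.} The plan is to reduce to irreducible $W$ and then verify the two asserted equalities one shape at a time, invoking parts~I and~II for the factors of rank at most six and carrying out an explicit computation for the new irreducible groups of rank seven and eight.

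First I would exploit that every ingredient of the statement is multiplicative. If $W=W'\times W''$ with $S=S'\sqcup S''$ and $L=L'\sqcup L''$, then $W_L=W'_{L'}\times W''_{L''}$ and $N_W(W_L)=N_{W'}(W'_{L'})\times N_{W''}(W''_{L''})$, the top-component characters $\widetilde{\rho_L}$ and $\widetilde{\omega_L}$ are external products of the corresponding characters of the two factors, $\alpha_L=\alpha_{L'}\boxtimes\alpha_{L''}$ and $\epsilon=\epsilon'\boxtimes\epsilon''$, and the cuspidal classes of $W_L$ are products of cuspidal classes of the factors, with $C_W(w)=C_{W'}(w')\times C_{W''}(w'')$ for $w=(w',w'')$. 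Consequently both equalities for $W$ follow from those for its irreducible factors, and it suffices to treat $W$ irreducible. For rank at most six this is precisely what parts~I and~II supply, so the remaining cases are the irreducible groups $A_7,A_8,B_7,B_8,D_7,D_8,E_7$ and $E_8$, and within each of these I would run through the shapes $\lambda\in\Lambda$, reusing the lower-rank top-component data for the factors of a proper parabolic $W_L$ but computing the full $N_W(W_L)$-action inside $W$.

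For the second equality I would compute the two class functions $\widetilde{\rho_L}$ and $\alpha_L\epsilon\widetilde{\omega_L}$ on $N_W(W_L)$ and compare them directly. The character $\widetilde{\rho_L}$ of $\mathbb{C}W_L e_L^L$ is obtained from the descent-algebra construction of Bergeron et al.\ using \ZigZag, and $\alpha_L$ and $\epsilon$ are the explicit linear characters from the definitions. For $\widetilde{\omega_L}$, the top component of $A(W_L)$, I would use the Lehrer-Solomon construction; for the top shape $L=S$ of $E_7$ and $E_8$ the dimension and $W$-action are read off from the M\"obius function of the fixed-point sublattice, for which I would import Fleischmann and Janiszczak's tables (with the misprints for $E_8$ corrected). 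Equality of the two class functions on every conjugacy class of $N_W(W_L)$ then settles the second identity.

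The first equality requires actually producing the linear characters $\varphi_w$. For the classical types I would give a uniform combinatorial recipe for $\varphi_w$ on $C_W(w)$, indexed by the (signed) cycle type of the cuspidal element $w$ in the spirit of Lehrer and Solomon's formula for the symmetric group, valid in every rank, and then check $\widetilde{\rho_L}=\sum_{w\in\mathcal{C}_L}\operatorname{Ind}_{C_W(w)}^{N_W(W_L)}\varphi_w$ for $n\le 8$; for $E_7$ and $E_8$ I would instead exhibit one specific linear character of each cuspidal centralizer and verify the same identity by direct computation. This first equality is the genuine obstacle: $\widetilde{\rho_L}$ is an honest character of $N_W(W_L)$, but nothing formal guarantees that it decomposes as a nonnegative sum of characters each induced from a \emph{linear} character of a centralizer, with exactly one summand per cuspidal class. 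Establishing that such a monomial decomposition exists, and certifying it for $E_8$ --- where the sheer size forces reliance on the M\"obius data above and on careful bookkeeping of the numerous centralizers and their linear characters in \GAP\ and \CHEVIE --- is where the work lies; by contrast the second equality is, once $\widetilde{\omega_L}$ is in hand, only a finite comparison of class functions.
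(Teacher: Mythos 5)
Your proposal follows essentially the same route as the paper: reduction to irreducible groups via multiplicativity, shape-by-shape computer verification that $\widetilde{\rho_L}=\alpha_L\epsilon\,\widetilde{\omega_L}$, a uniform (signed) cycle-type recipe for the classical characters $\varphi_w$ checked for $n\le 8$, explicitly exhibited characters for the cuspidal centralizers in $E_7$ and $E_8$, and the M\"obius-function/Fleischmann--Janiszczak data for the top components. The only divergence is minor: the paper must use the M\"obius (fixed-point lattice) method for the top components of $B_7$, $D_7$, $B_8$, and $D_8$ as well --- not just $E_7$ and $E_8$ --- since the earlier Lehrer--Solomon-style computation of $\omega_S$ that you propose for the classical cases is computationally infeasible in rank eight.
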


Set $\alpha_w=\alpha_L|_{C_W(w)}$ when $w$ is cuspidal in $W_L$. Then it
follows from~\autoref{eq:3a} and the theorem that for $\lambda$ in $\Lambda$
we have
\begin{equation*}
  \rho_\lambda= \operatorname{Ind}_{N_W\left(W_L\right)}^W
  \widetilde{\rho_L} = \operatorname{Ind}_{N_W\left(W_L\right)}^W \Big(
  \sum_{w\in\mathcal{C}_L} \operatorname{Ind}_{C_W(w)}^{N_W(W_L)} \varphi_w
  \Big)\\ 
  = \sum_{w\in\mathcal{C}_L} \operatorname{Ind}_{C_W(w)}^{W} \ \varphi_w  
\end{equation*}
and 
\begin{multline}
  \label{eq:ro}
  \omega_\lambda= \operatorname{Ind}_{N_W\left(W_L\right)}^W
  \widetilde{\omega_L} = \operatorname{Ind}_{N_W\left(W_L\right)}^W \Big(
  \sum_{w\in\mathcal{C}_L} \operatorname{Ind}_{C_W(w)}^{N_W(W_L)} (\alpha_w
  \epsilon \varphi_w) \Big)\\
  = \sum_{w\in\mathcal{C}_L} \operatorname{Ind}_{C_W(w)}^{W} (\alpha_w
  \epsilon \varphi_w).
\end{multline}
Then the Lehrer-Solomon conjecture
for finite Coxeter groups with rank at most eight follows immediately from
\autoref{eq:ro} and~\autoref{eq:3a} because
\[
H^p(M, \mathbb C)\cong \bigoplus_{\mathrm{rank}(\lambda)=p} A_\lambda,
\]
where $\mathrm{rank}(\lambda)= |L|$ for any $L$ in $\lambda$ (see
\cite[\S2]{douglasspfeifferroehrle:cohomology}).

Using the notation above, set $\chi_w= \alpha_w\epsilon\varphi_w$. 

\begin{corollary}
  Suppose that $W$ is a finite Coxeter group of rank at most eight. Then
  there is a $\mathbb CW$-module isomorphism
  \begin{equation*}
    H^p(M, \mathbb C)\cong \bigoplus_w \operatorname{Ind}_{C_W(w)}^W
    (\chi_w) \qquad p=0, \dots, \mathrm{rank}(W)
  \end{equation*}
  where $w$ runs over a set of representatives of the conjugacy classes of
  $W$ with reflection length equal $p$ and $\chi_w$ is a linear character
  of $C_W(w)$.
\end{corollary}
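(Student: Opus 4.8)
The plan is to derive the corollary formally from \autoref{thm:1} together with the two identities displayed just before its statement. First I would use the given decomposition $H^p(M,\mathbb C)\cong\bigoplus_{\mathrm{rank}(\lambda)=p}A_\lambda$, so that the character of $H^p(M,\mathbb C)$ equals $\sum_{\mathrm{rank}(\lambda)=p}\omega_\lambda$. Fixing for each shape $\lambda$ of rank $p$ some $L\in\lambda$ and substituting the formula for $\omega_\lambda$ from \autoref{eq:ro} gives that this character is
\[
\sum_{\mathrm{rank}(\lambda)=p}\ \sum_{w\in\mathcal{C}_L}
\operatorname{Ind}_{C_W(w)}^W(\alpha_w\epsilon\varphi_w)
=\sum_{\mathrm{rank}(\lambda)=p}\ \sum_{w\in\mathcal{C}_L}
\operatorname{Ind}_{C_W(w)}^W\chi_w .
\]
Each $\chi_w=\alpha_w\epsilon\varphi_w$ is a product of the linear characters $\alpha_w$, $\epsilon$ and $\varphi_w$, the last being linear by \autoref{thm:1}; hence every $\chi_w$ is a linear character of $C_W(w)$, which settles one of the two claims.

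It then remains to reindex this double sum by the conjugacy classes of $W$ of reflection length $p$, and the crux is that passing to parabolic closures identifies the two index sets. If $w\in W_L$ is cuspidal, then its parabolic closure is $W_L$, so its fixed space in $V$ is $\mathrm{Fix}(W_L)$, of codimension $|L|$; since the reflection length of an element is the codimension of its fixed space, $w$ has reflection length $|L|=\mathrm{rank}(\lambda)=p$. Thus every $\chi_w$ occurring above is attached to an element of reflection length $p$. Conversely, a class of reflection length $p$ with representative $c$ has parabolic closure a parabolic subgroup of rank $p$, hence $W$-conjugate to a standard $W_L$ with $L$ in a shape of rank $p$; conjugating $c$ into $W_L$ exhibits its class as a cuspidal class of $W_L$. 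This makes the assignment $(\lambda,w)\mapsto[w]_W$ surjective onto the classes of reflection length $p$.

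The step I expect to be the main obstacle is the injectivity of this assignment: I must rule out that two of the cuspidal classes indexing the double sum, whether in different shapes or as distinct $W_L$-classes within a single shape, coincide in $W$. Here I would use that a $W$-conjugate of a cuspidal $w\in W_L$ that again lies in $W_L$ has the same fixed space $\mathrm{Fix}(W_L)$, so the conjugating element stabilizes $\mathrm{Fix}(W_L)$ and therefore lies in $N_W(W_L)$, which is exactly the setwise stabilizer of $\mathrm{Fix}(W_L)$. Consequently distinct shapes never interfere, and the only possible coincidences come from the action of $N_W(W_L)$ on the cuspidal classes of $W_L$, so injectivity reduces to the assertion that $N_W(W_L)$ does not fuse distinct cuspidal classes of $W_L$. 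This is precisely what the corollary needs: were such a fusion to occur, \autoref{eq:ro} would assign two linear characters to a single class $[c]$, and their sum, a two-dimensional character of $C_W(c)$, could not be rewritten as one induced linear character. I would verify the no-fusion property from the explicit description of the quotients $N_W(W_L)/W_L$ and of the cuspidal classes of the relevant $W_L$, in keeping with \autoref{thm:1}; granting it, the double sum becomes $\bigoplus_w\operatorname{Ind}_{C_W(w)}^W(\chi_w)$ with $w$ ranging over representatives of the reflection-length-$p$ classes, which is the assertion of the corollary.
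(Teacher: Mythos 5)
Your derivation is the paper's own argument, made explicit: the paper likewise obtains the corollary by summing \autoref{eq:ro} over the shapes of rank $p$ and invoking $H^p(M,\mathbb C)\cong\bigoplus_{\mathrm{rank}(\lambda)=p}A_\lambda$, and it treats the identification of the index set $\coprod_{\mathrm{rank}(\lambda)=p}\mathcal{C}_L$ with the conjugacy classes of reflection length $p$ as known background (``follows immediately''). Your reconstruction of that identification is correct: Carter's observation that reflection length equals the codimension of the fixed space, Steinberg's theorem that parabolic closures are pointwise stabilizers of fixed spaces (so a cuspidal element of $W_L$ has fixed space exactly $X_L$), and the identification of $N_W(W_L)$ with the setwise stabilizer of $X_L$ together reduce injectivity to the absence of fusion of distinct cuspidal classes of $W_L$ under $N_W(W_L)$. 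The one step you leave open is precisely that no-fusion statement, but you do not need to verify it case by case from the quotients $N_W(W_L)/W_L$: it is a known theorem on cuspidal classes of parabolic subgroups (see \cite[\S 3.2]{geckpfeiffer:characters}, a reference the paper already uses), and it is this fact the paper relies on silently; it is also visible in \autoref{sec:class}, where the cuspidal representatives $\{\,w_\mu\mid \mubar=\mu_\lambda\,\}$, as $\lambda$ varies, exhaust the full set of class representatives $\{\,w_\mu\mid \mu\Vdash n\,\}$ of $W_n$ without repetition. With that citation supplied, your proof is complete and coincides with the paper's.
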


Using ~\autoref{eq:1a},~\autoref{eq:OmegaSum}, and~\autoref{eq:3a}, the
theorem yields the following corollary, which relates the Orlik-Solomon and
the regular characters of $W$.

\begin{corollary}\label{ConjectureA}
  Suppose that $W$ is a finite Coxeter group of rank at most eight and that
  $\mathcal{R}$ is a set of conjugacy class representatives of $W$. Then for
  each $w\in\mathcal{R}$ there exists a linear character $\varphi_w$ of
  $C_W(w)$ such that
  \[
  \rho= \sum_{w\in\mathcal{R}} \operatorname{Ind}_{C_W(w)}^W \varphi_w
  \quad\text{and}\quad\omega= \epsilon \sum_{w\in\mathcal{R}}
  \operatorname{Ind}_{C_W(w)}^W (\alpha_w \varphi_w ),
  \] 
  where $\epsilon$ is the sign character of $W$.
\end{corollary}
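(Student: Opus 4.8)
\emph{Proof proposal for \autoref{ConjectureA}.} The plan is to add up, over all shapes $\lambda\in\Lambda$, the two per-shape identities for $\rho_\lambda$ and $\omega_\lambda$ that were just deduced from \autoref{thm:1}, and then to re-index the resulting double sums by the conjugacy classes of $W$. Fix a representative $L_\lambda\subseteq S$ of each shape $\lambda$ and let $\mathcal{C}_{L_\lambda}$ be the set of cuspidal class representatives of $W_{L_\lambda}$ appearing in the theorem. Everything hinges on the combinatorial fact that
\[
\mathcal{R}_0:=\bigsqcup_{\lambda\in\Lambda}\mathcal{C}_{L_\lambda}
\]
is a set of representatives for the conjugacy classes of $W$. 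By the theory of parabolic closures each $w\in W$ is cuspidal in a parabolic subgroup that is unique up to $W$-conjugacy, so $\mathcal{R}_0$ meets every class; and since a $W$-conjugacy between two elements of $W_L$ must normalise $W_L$, two elements of $\mathcal{C}_{L_\lambda}$ are $W$-conjugate only if they are already $N_W(W_{L_\lambda})$-conjugate. The one point of real content is therefore that $N_W(W_{L_\lambda})$ does not fuse distinct cuspidal classes of $W_{L_\lambda}$, so that $\mathcal{R}_0$ is irredundant; this is exactly what makes the clean, one-linear-character-per-class statement of the corollary possible (a fusion would force a sum of two induced linear characters, which cannot be a single induced linear character), and I expect it to be the main obstacle. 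In the classical types it is forced by conjugacy invariants such as the signed cycle type, and for the exceptional groups of rank seven and eight it is part of the data verified in the computations that establish \autoref{thm:1}.

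Granting the transversal property, I would carry out the summation directly. Using \autoref{eq:1a} together with the displayed formula $\rho_\lambda=\sum_{w\in\mathcal{C}_{L_\lambda}}\operatorname{Ind}_{C_W(w)}^W\varphi_w$ gives
\[
\rho=\sum_{\lambda\in\Lambda}\rho_\lambda=\sum_{w\in\mathcal{R}_0}\operatorname{Ind}_{C_W(w)}^W\varphi_w,
\]
with the $\varphi_w$ supplied by the theorem. For the Orlik--Solomon character I would combine \autoref{eq:OmegaSum} with \autoref{eq:ro}; because $\epsilon$ is a linear character of $W$, the projection formula $\operatorname{Ind}_{C_W(w)}^W(\epsilon\,\alpha_w\varphi_w)=\epsilon\cdot\operatorname{Ind}_{C_W(w)}^W(\alpha_w\varphi_w)$ lets me pull $\epsilon$ out of the sum, yielding
\[
\omega=\sum_{\lambda\in\Lambda}\omega_\lambda=\epsilon\sum_{w\in\mathcal{R}_0}\operatorname{Ind}_{C_W(w)}^W(\alpha_w\varphi_w).
\]
This proves both identities for the particular transversal $\mathcal{R}=\mathcal{R}_0$.

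To finish for an arbitrary set $\mathcal{R}$ of class representatives, I would transport the characters along inner automorphisms. Given $u\in\mathcal{R}$, let $w\in\mathcal{R}_0$ be the unique representative of its class and choose $g\in W$ with $u=gwg\inverse$; then $C_W(u)=g\,C_W(w)\,g\inverse$, and defining $\varphi_u$ and $\alpha_u$ by pulling $\varphi_w$ and $\alpha_w$ back along the isomorphism $x\mapsto g\inverse x g$ of $C_W(u)$ onto $C_W(w)$ produces linear characters with $\operatorname{Ind}_{C_W(u)}^W\varphi_u=\operatorname{Ind}_{C_W(w)}^W\varphi_w$, and similarly with $\alpha_u\varphi_u$, since induction is unchanged under simultaneous conjugation of the subgroup and the character. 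Substituting these term by term into the two displays gives the asserted formulas for $\mathcal{R}$. Apart from the transversal property of $\mathcal{R}_0$, the argument is only the bookkeeping of the double summation, the projection formula for the sign character, and the conjugation-invariance of induction.
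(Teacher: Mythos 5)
Your proposal is correct and follows essentially the same route as the paper, which obtains the corollary by summing the per-shape identities coming from \autoref{thm:1} via \autoref{eq:1a}, \autoref{eq:OmegaSum}, and \autoref{eq:3a} and re-indexing over conjugacy classes. The points you elaborate---that the cuspidal classes of the parabolics, taken over all shapes, form an unfused transversal of the classes of $W$ (this is a general theorem, cf.\ Geck--Pfeiffer, so no case-by-case verification is needed), the projection formula for $\epsilon$, and the transport of characters to an arbitrary set of representatives---are exactly the steps the paper leaves implicit.
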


As noted above,  \autoref{thm:1} has been proved for symmetric groups,
dihedral groups, and Coxeter groups of rank at most six in earlier work. It
is also shown in~\cite{douglasspfeifferroehrle:cohomology} that if the
conclusion of~\autoref{thm:1} holds for Coxeter groups $W$ and $W'$ then it
holds for $W\times W'$. Therefore, it suffices to consider only irreducible
Coxeter groups.  In this paper we complete the proof of the theorem by
showing that the conclusion of~\autoref{thm:1} holds for Coxeter groups of
type $B_7$, $B_8$, $D_7$, $D_8$, $E_7$, and $E_8$.

To prove~\autoref{thm:1} for the groups just listed, we follow the approach
described in \cite[\S4]{douglasspfeifferroehrle:inductive} using the \GAP\
computer algebra system system \cite{gap3} with the \CHEVIE\ \cite{chevie}
and \ZigZag\ \cite{zigzag} packages.
\begin{enumerate} \setlength{\itemindent}{2.5em} 
\item[\scshape Step 1.]\label{Step1} Compute $\widetilde{\rho_L}$,
  $\widetilde{\omega_L}$, and verify that $\widetilde{\rho_L}$ and
  $\alpha_L\epsilon \widetilde{\omega_L}$ are equal.
\item[\scshape Step 2.]\label{Step2} Find linear characters $\varphi_w$ such
  that
  \begin{equation*}
    \widetilde{\rho_L}=\sum_{w\in\mathcal{C}_L}
    \operatorname{Ind}_{C_W(w)}^{N_W(W_L)} \varphi_w.    
  \end{equation*}
\end{enumerate}

The character of $W_L$ afforded by $\BBC W_L e_L^L$ is denoted by
$\rho_L$. Then $\widetilde{\rho_L}$ is an extension of $\rho_L$ to
$N_W(W_L)$. Similarly, the character of $W_L$ on $A(W_L)$ is denoted by
$\omega_L$, and $\widetilde{\omega_L}$ is the extension of $\omega_L$ to
$N_W(W_L)$. The computation of the top component characters $\rho_S=
\widetilde{\rho_S}$ is described
in~\cite[\S3.1]{bishopdouglasspfeifferroehrle:computations}. This
computation has been implemented in the {\tt ECharacters} function in the
\ZigZag\ package. The {\tt ECharacters} function takes a finite Coxeter
group as its argument and returns the list of the characters
$\rho_\lambda$. Then $\rho_{\{S\}}= \widetilde{\rho_S}$. For $L$ a proper
subset of $S$, the computation of $\widetilde{\rho_L}$, given $\rho_L$, is
described in ~\cite[\S2]{bishopdouglasspfeifferroehrle:computationsII}.

The computation of the characters $\widetilde{\omega_L}$, for $L$ a proper
subset of $S$, is described
in~\cite[\S3]{bishopdouglasspfeifferroehrle:computations}
and~\cite[\S2]{bishopdouglasspfeifferroehrle:computationsII}. The method
used in these references is computationally too expensive to compute the
character $\omega_S$ for the groups of rank eight. In this paper we take an
alternate approach.

Orlik and Solomon~\cite{orliksolomon:combinatorics} proved that the graded
character of an element $w$ in $W$ on $A(W)$ may be computed using the
M\"obius function of the poset of fixed points of $w$ in the lattice of
$\mathcal A$. Precisely, let $\mathcal{L}$ be the intersection lattice of
$\mathcal{A}$ and for $w$ in $W$, let $\mathcal{L}^w$ denote the subposet of
$w$-stable subspaces in $\mathcal{L}$. Then
\begin{equation}
  \label{eq:poin}
  \sum_{i=0}^n
  \operatorname{Trace} \left(w, H^i\left( M,\mathbb{C} \right) \right) t^i
  =\sum_{ X\in\mathcal{L}^w} \mu_w \left( X\right) \left(-t \right)^{n- \dim{X}},  
\end{equation}
where $\mu_w$ is the M\"obius function of $\mathcal{L}^w$, $n$ is the rank
of $W$, and $t$ is an indeterminate. Denote the polynomial
in~\autoref{eq:poin} by $P_w(t)$. Then $\omega_S\left(w \right)$ is the
coefficient of $t^n$ in $P_w\left(t\right)$.

The polynomials $P_w(t)$ have been computed by Lehrer for Coxeter groups of
types $A$ and $B$~\cite{lehrer:poincare, lehrer:hyperoctahedral}, and
Fleischmann and Janiszczak in all
cases~\cite{fleischmannjaniszczak:lattices,
  fleischmannjaniszczak:combinatorics}. When $W$ has type $E_7$ or $E_8$ we
used the polynomials calculated by Fleischmann and Janiszczak to find
$\omega_S$. For the groups of type $B_7$, $D_7$, $B_8$, and $D_8$ we
calculated the polynomials $P_w\left(t\right)$ as described below.

The first step is to calculate $\mathcal{L}$.  The subspaces in
$\mathcal{L}$ are parameterized by the set of pairs $\left(\tau, L\right)$
where $L\subseteq S$ runs through a fixed set of representatives of the
shapes of $W$ and $\tau\in W$ is a coset representative of
$N_W\left(W_L\right)$ in $W$.  The pair $\left(\tau, L\right)$ corresponds
with the subspace $\tau X_L$ of $V$, where $X_L=\bigcap_{s\in L}
\operatorname{Fix} \left(s\right)$.

The subspace corresponding to $\left(\tau, L\right)$ is contained in the
subspace corresponding to $\left(\sigma, K\right)$ if and only if $\tau X_L
\subseteq\sigma X_K$.  This in turn holds if and only if $\sigma^{-1} \tau
X_L\subseteq X_K$ which holds if and only if $W_K\subseteq
W_L^{\tau^{-1}\sigma}$.  This last condition can be checked by calculating a
minimal length representative $z$ of the $\left(W_L, W_K\right)$-double
coset of $\tau^{-1}\sigma$. Then $W_K\subseteq W_L^{\tau^{-1} \sigma}$ if
and only if $K\subseteq L^z$.  We also remark that it suffices to assume
that $\sigma=1$, for the spaces contained in $\left(\sigma, K\right)$ are
precisely the spaces $\left(\sigma\tau, L\right)$ for which $\left(\tau,
  L\right)$ is contained in $\left(1, K\right)$.

Finally, to determine which subspaces $\tau X_L$ are in the subposet
$\mathcal{L}^w$, we observe that $w\tau X_L=\tau X_L$ if and only if
$\tau^{-1}w\tau X_L=X_L$. This in turn holds if and only if $\tau^{-1}w
\tau\in N_W\left(W_L\right)$.  It remains to calculate $P_w\left(t\right)$
using the formula above, after calculating all the values of $\mu$ by
recursion. Note that if $w_0$ is central in $W$ then
$\mathcal{L}^w=\mathcal{L}^ {w_0w}$. In this situation, only one of
$P_w\left(t \right)$ or $P_{w_0w}\left(t\right)$ needs to be calculated
whenever $w$ and $w_0w$ lie in different conjugacy classes.

We remark that the polynomials calculated using the method above in
types~$B_7$ and $B_8$ agree with those given by Lehrer
\cite{lehrer:hyperoctahedral}. We also note that in the table for $E_8$ in
\cite{fleischmannjaniszczak:combinatorics} the values for the classes called
$2A_2$ and $2D_4$ are missing a minus sign. Also, the second appearance of
$A_1+E_6(a_2)$ should read $A_2+E_6(a_2)$ and the correct polynomial for the
class $E_8(a_3)$ is $(t-1)(t+1)(t^2+1)(13t^4-1)$.

\hyperref[Step1]{\scshape Step~1} is completed by comparing
$\widetilde{\rho_L}$ and $\alpha_L\epsilon \widetilde{\omega_L}$ once both
have been computed.  It remains to
complete~\hyperref[Step2]{\scshape Step~2}: find linear characters
$\varphi_w$ such that
\[
\widetilde{\rho_L} =\sum_{w\in \mathcal{C}_L} \operatorname{Ind}_{C_W
  (w)}^{N_W (W_L)} \varphi_w.
\]
This is accomplished in the next section for the groups of type $B_n$ and
$D_n$, and in the final section for the groups of type $E_7$ and $E_8$.

\section{Classical groups}\label{sec:class}

In this section we take $W$ to be of classical type. For each subset $L$ of
$S$ and each cuspidal element $w$ in $W_L$ we construct a linear character
$\varphi_w$ of $C_W (w)$. We then verify that when the rank of $W$ is at
most eight, these characters satisfy the conclusion of~\autoref{thm:1}. For
symmetric groups, the characters $\varphi_w$ are the ones constructed
in~\cite{douglasspfeifferroehrle:cohomology}, where it is shown that they
satisfy the conclusion of~\autoref{thm:1}. We expect that the characters
$\varphi_w$ constructed in this section satisfy the conclusion
of~\autoref{thm:1} for all ranks.

Note that because $\widetilde{\rho_L}$ only depends on the conjugacy
class of $W_L$, or equivalently, the shape of $L$, we need only consider a
suitably chosen representative in each conjugacy class of parabolic
subgroups of $W$, and because $\operatorname{Ind}_{C_W (w)}^{N_W (W_L)}
\varphi_w$ depends only in the conjugacy class of $w$, we need only consider
one suitably chosen representative in each cuspidal conjugacy class of
$W_L$.

Our construction follows the same general pattern as the construction
in~\cite{douglasspfeifferroehrle:cohomology}, which goes back at least
to~\cite{lehrersolomon:symmetric}. It is most naturally phrased in terms of
permutations and signed permutations. We begin by reviewing the construction
for symmetric groups.

\subsection{The characters \texorpdfstring{$\varphi^A_{\lambda}$}{}}

A \emph{composition} is a non-empty tuple $\lambda= (\lambda_1, \dots,
\lambda_a)$ of positive integers and a \emph{partition} is a composition
with the property that $\lambda_i\geq \lambda_{i+1}$ for $1\leq i\leq
a-1$. The numbers $\lambda_i$ are called the \emph{parts} of $\lambda$, the
sum of the parts of $\lambda$ is denoted by $|\lambda|$, and the number of
parts of $\lambda$ is denoted by $l(\lambda)$. If $|\lambda|=n$, then
$\lambda$ is called a composition, or a partition, of $n$. If $\lambda$ is a
partition of $n$, we write $\lambda \vdash n$. By convention, the empty
tuple is a composition, and a partition, of zero.

For a positive integer $n$, set $[n]=\{1, 2, \dots, n\}$ and let $S_n$ denote
the group of permutations of $[n]$. If $s_i$ denotes the transposition that
switches $i$ and $i+1$, then $S=\{s_1, s_2, \dots, s_{n-1}\}$ is a Coxeter
generating set for $S_n$ such that $(S_n, S)$ is a Coxeter system of type
$A_{n-1}$. The product of symmetric groups $S_{\lambda_1} \times \dotsm
\times S_{\lambda_a}$ is considered as a subgroup, $S_\lambda$, of the
symmetric group $S_n$ via the obvious embedding, where $S_{\lambda_1}$ acts
on $\{1, \dots, \lambda_1\}$, $S_{\lambda_2}$ acts on $\{\lambda_1+1, \dots,
\lambda_1+\lambda_2\}$, and so on. Subgroups of $S_n$ of the form
$S_\lambda$ are called \emph{Young subgroups.} If $L$ is a subset of $S$,
then $\langle L \rangle =S_\lambda$ for a unique composition $\lambda$ of
$n$. Moreover, the rule $L\mapsto \langle L \rangle$ defines a bijection
between the set of subsets of $S$ and the set of Young subgroups of $S_n$. 
Two Young subgroups $S_\lambda$ and $S_{\lambda'}$ are conjugate if and only if
$\lambda$ and $\lambda'$ determine the same partition of $n$. In this way,
the set of shapes of $S_n$ is parametrized by partitions of $n$.

The set of conjugacy classes in $S_n$ is also parametrized by partitions of
$n$. Suppose $\lambda= (\lambda_1, \dotsm,\lambda_a)$ is a partition of
$n$. For $1\leq i\leq a$ define $c_i$ in $S_n$ by
\[
c_i(v)=
\begin{cases}
  v+1 & \text{if $v=u+1, \dots, u+\lambda_i-1$,} \\
  u+1& \text{if $v=u+\lambda_i$,} \\
  v &\text{otherwise},
\end{cases} 
\qquad\text{where}\qquad u = \sum_{k=1}^{i-1} \lambda_k.
\] 
(Here and in the formulas below, 
we use the convention that an empty sum is $0$.)
Then $c_i$ is a $\lambda_i$-cycle in the direct factor $S_{\lambda_i}$ of
$S_\lambda$. Define $w_\lambda=c_1c_2\dotsm c_a$.  Then
\begin{itemize}
\item $w_\lambda$ is a representative of the unique cuspidal conjugacy class
  in $S_{\lambda}$ and
\item $\{\, w_\lambda \mid \lambda \vdash n\,\}$ is a complete set of
  conjugacy class representatives in $S_n$.
\end{itemize}

For each $i$ such that $\lambda_i=\lambda_{i+1}$ define $x_i$ in $S_n$ by
\[
x_i(v)=
\begin{cases}
  v+\lambda_i & \text{if $v=u+1, \dots, u+\lambda_i$,} \\
  v-\lambda_i& \text{if $v=u+\lambda_i+1, \dots, u+2\lambda_i$,} \\
  v &\text{otherwise},
\end{cases} 
\quad\text{where}\quad u = \sum_{k=1}^{i-1} \lambda_k.
\]
Then conjugation by $x_i$ permutes $\{c_1, \dots, c_a \}$ by
exchanging the cycles $c_i$ and $c_{i+1}$ and hence $x_i$ centralizes
$w_\lambda$. It is well-known that $C_{S_{\lambda}}(w_\lambda)$ is the
abelian group generated by the cycles $c_1$, \dots, $c_a$ and that
$C_{S_n}(w_\lambda)$ is generated by $C_{S_{\lambda}}(w_\lambda)$
together with the involutions $x_i$, for each $i$ such that
$\lambda_i=\lambda_{i+1}$.  The abelianization of
$C_{S_n}(w_\lambda)$ is generated by the images of the first $c_i$ and
$x_i$ for every cycle length.  Whenever $i$ is a part of $\lambda$
define $\lambda(i)= \left| \{\,k \mid \lambda_k=i\,\} \right|$ and
$\ibar = \min \{\, k\mid \lambda_k = i\,\}$.
\begin{lemma}
  Let $X_{\lambda} = \{\, c_{\ibar}\mid i\in \lambda\,\} \amalg \{\,
  x_{\ibar}\mid \lambda( i) >1 \,\}$.  Suppose $\psi\colon
  X_{\lambda} \to \mathbb C^{\times}$ satisfies
  \begin{enumerate}
  \item $\psi(c_i)$ is a $\lambda_i^{\text{th}}$ root of unity for all
    $c_i \in X_{\lambda}$, and
  \item $\psi(x_i)^2=1$ for all $x_i \in X_{\lambda}$.
  \end{enumerate}
  Then $\psi$ has a unique extension to a linear character of
  $C_{S_n}(w_\lambda)$. Moreover, every linear character of
  $C_{S_n}(w_\lambda)$ arises in this way.
\end{lemma}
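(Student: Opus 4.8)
The plan is to deduce the statement from a computation of the abelianization of $C_{S_n}(w_\lambda)$, since the linear characters of any finite group are precisely the homomorphisms from its abelianization to $\mathbb{C}^\times$. The starting point is the standard description of the centralizer as a direct product over the distinct cycle lengths of $w_\lambda$:
\[
C_{S_n}(w_\lambda) \cong \prod_{i} \bigl(\langle c_{\ibar}\rangle \wr S_{\lambda(i)}\bigr),
\]
the product taken over the distinct parts $i$ of $\lambda$. Here the factor indexed by $i$ consists of the $\lambda(i)$ commuting $i$-cycles of that length, each generating a cyclic group of order $i$, together with the symmetric group $S_{\lambda(i)}$ that permutes them and is generated by the relevant involutions $x_k$. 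This is exactly the structure recorded just above the lemma: $C_{S_\lambda}(w_\lambda)$ is the abelian base group $\prod_i\langle c_{\ibar}\rangle^{\lambda(i)}$, and the $x_k$ generate the permuting symmetric groups. Because abelianization carries direct products to direct products, it suffices to abelianize each wreath-product factor separately.

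The next step is to compute $\bigl(\langle c_{\ibar}\rangle \wr S_m\bigr)^{\mathrm{ab}}$ for $m=\lambda(i)$, writing the base group $(\mathbb{Z}/i)^m \rtimes S_m$. Passing to the abelianization makes conjugation trivial, so the $m$ copies of the base cyclic group, permuted transitively by $S_m$, all become identified; their common image is the coinvariant quotient of the permutation module $(\mathbb{Z}/i)^m$, which is cyclic of order exactly $i$ and is generated by the image of $c_{\ibar}$. The group $S_m$ contributes its own abelianization, which is $\mathbb{Z}/2$ when $m\geq2$ (generated by the image of the transposition $x_{\ibar}$) and trivial when $m=1$. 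Since the base group is central in the abelianization, there is no further interaction, whence
\[
\bigl(\langle c_{\ibar}\rangle \wr S_{\lambda(i)}\bigr)^{\mathrm{ab}} \cong
\begin{cases}
\mathbb{Z}/i \times \mathbb{Z}/2 & \text{if }\lambda(i)>1,\\
\mathbb{Z}/i & \text{if }\lambda(i)=1.
\end{cases}
\]
Assembling the factors shows that $C_{S_n}(w_\lambda)^{\mathrm{ab}}$ is the direct product of one cyclic group $\mathbb{Z}/i$ for each distinct part $i$ and one copy of $\mathbb{Z}/2$ for each part with $\lambda(i)>1$, with the images of the elements of $X_\lambda$ serving as generators of the respective cyclic direct factors.

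With this description the lemma is immediate. A homomorphism from this direct product of cyclic groups to $\mathbb{C}^\times$ is determined by, and may be prescribed arbitrarily by, its values on the generating images, subject only to the order constraints imposed by each cyclic factor. These constraints are exactly conditions (1) and (2): the generator $c_{\ibar}$ has order $\lambda_{\ibar}=i$, forcing $\psi(c_{\ibar})$ to be a $\lambda_{\ibar}\th$ root of unity, while $x_{\ibar}$ has order $2$, forcing $\psi(x_{\ibar})=\pm1$. Conversely, any $\psi$ obeying (1) and (2) factors through the abelianization and so extends uniquely to a linear character; and since every linear character restricts on $X_\lambda$ to such a $\psi$, the parametrization is a bijection.

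The main obstacle is the wreath-product abelianization, and within it the claim that the image of $c_{\ibar}$ has order exactly $i$ rather than a proper divisor, together with the independence of this cyclic factor from the sign factor $\mathbb{Z}/2$. Abstractly this amounts to identifying the coinvariants of the permutation module $(\mathbb{Z}/i)^{\lambda(i)}$ with $\mathbb{Z}/i$ via the sum map, whose kernel is the augmentation submodule generated by the commutators $[c_k,x_k]=c_kc_{k+1}^{-1}$. If one prefers to avoid the coinvariants computation, the same independence can be checked concretely: the cyclic factor is detected by the linear character of the abelian group $C_{S_\lambda}(w_\lambda)$ that sends every $i$-cycle to a fixed primitive $i\th$ root of unity and every other cycle to $1$ (this value is $S_{\lambda(i)}$-invariant on each block, hence extends to $C_{S_n}(w_\lambda)$), while the sign factor is detected by the sign character of the permuting $S_{\lambda(i)}$.
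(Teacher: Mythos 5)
Your proof is correct and takes essentially the same route as the paper's: the paper proves this lemma by deferring to its type-$B$ analogue (Lemma~\ref{lem:b}), whose proof likewise reduces the statement to computing the abelianization of the centralizer, decomposes the centralizer into a direct product of wreath-product factors $(Z_i)^{\lambda(i)}\rtimes S_{\lambda(i)}$ over the distinct parts $i$, and identifies each factor's abelianization as $Z_i\times Z_2$ (or $Z_i$ when $\lambda(i)=1$) generated by the images of $c_{\ibar}$ and $x_{\ibar}$. The only difference is one of detail: where the paper calls the wreath-product abelianization ``straightforward to check,'' you carry it out explicitly via the coinvariants of the permutation module $(\mathbb{Z}/i)^{\lambda(i)}$, which fills in the same step rather than replacing it.
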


\begin{proof}
  See the proof of~\autoref{lem:b} below.
\end{proof}

For $k\geq 1$ denote the $k\th$ root of unity $e^{2\pi i/k}$ by $\zeta_k$.
For a partition $\lambda=(\lambda_1, \dots, \lambda_a)$ of $n$, let
$\varphi_{\lambda}^A$ be the character of $C_{S_n}(w_\lambda)$ defined (as in
the preceding lemma) by
\begin{itemize}
\item $\varphi_{\lambda}^A(c_i) =\zeta_{|c_i|}$ for all $c_i \in X_{\lambda}$ and
\item $\varphi_{\lambda}^A(x_i) =1$ for all $x_i \in X_{\lambda}$.
\end{itemize}
The next theorem is proved in~\cite{douglasspfeifferroehrle:cohomology}.

\begin{theorem}\label{thm:dpr}
  Suppose $\lambda$ is a partition of $n$ and let $\widetilde{\rho_\lambda}$
  be the top component character of the parabolic subgroup $S_\lambda$ of
  $S_n$. Then
  \[
  \widetilde{\rho_\lambda}= \operatorname{Ind}_{ C_{S_n} (w_\lambda)}^{
    N_{S_n}(S_\lambda)} \varphi_\lambda^A .
  \]
\end{theorem}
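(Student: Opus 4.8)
The plan is to reduce the statement to a single part-size by a multiplicativity argument, settle the one-cycle case by identifying the top component with the Lie representation, and then treat repeated parts with a wreath-product induction lemma.

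First I would exploit the fact that both sides factor over the distinct part-sizes of $\lambda$. Grouping the parts of $\lambda$ by value and writing $\lambda(m)$ for the multiplicity of $m$, we have $S_\lambda = \prod_m S_m^{\lambda(m)}$, with normalizer $N_{S_n}(S_\lambda) = \prod_m \bigl(S_m \wr S_{\lambda(m)}\bigr)$, centralizer $C_{S_n}(w_\lambda) = \prod_m \bigl(\langle c\rangle \wr S_{\lambda(m)}\bigr)$ where $\langle c\rangle$ is cyclic of order $m$, and $\varphi_\lambda^A$ equal to the outer tensor product of the corresponding characters. Since the quasi-idempotent $e_L^L$ is multiplicative with respect to the direct product decomposition of $S_\lambda$, the $N_{S_n}(S_\lambda)$-module $\mathbb{C}S_\lambda e_L^L$ is the outer tensor product of the top components of the factors, so $\widetilde{\rho_\lambda}$ is the corresponding outer tensor product of characters and the distinct sizes do not interact. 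As induction commutes with direct products, it therefore suffices to prove the theorem when $\lambda=(m^k)$ has a single part-size.

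For the base case $k=1$, i.e.\ $\lambda=(m)$, we have $S_\lambda = S_m$, $w_\lambda=c$ an $m$-cycle, $C_{S_m}(c)=\langle c\rangle$, $N_{S_m}(S_m)=S_m$, and $\varphi_{(m)}^A$ the faithful linear character $c\mapsto\zeta_m$. Here the statement reads $\rho_{(m)}=\operatorname{Ind}_{\langle c\rangle}^{S_m}\zeta_m$, which identifies the top component $\mathbb{C}S_m e_S^S$ with the Lie module of $S_m$. I would establish this by recalling the identification of the top graded piece of the Bergeron, Bergeron, Howlett and Taylor decomposition of $\mathbb{C}S_m$ with the multilinear component of the free Lie algebra, together with Klyachko's theorem that the latter is $\operatorname{Ind}_{\langle c\rangle}^{S_m}\zeta_m$ (the induced character being independent of the chosen primitive root since the Lie module is rational). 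For the homogeneous case $\lambda=(m^k)$, set $G=S_m$, $H=\langle c\rangle$, $\theta=\zeta_m$, and $\psi=\operatorname{Ind}_H^G\theta=\rho_{(m)}$ by the base case. Then $N_{S_n}(S_\lambda)=G\wr S_k$, $C_{S_n}(w_\lambda)=H\wr S_k$, and $\varphi_{(m^k)}^A$ is the extension $\theta^{\boxtimes k}\boxtimes\mathbf{1}_{S_k}$ of $\theta^{\boxtimes k}$ that is trivial on $S_k$. The target is the wreath-product induction identity
\begin{equation*}
  \operatorname{Ind}_{H\wr S_k}^{G\wr S_k}\bigl(\theta^{\boxtimes k}\boxtimes\mathbf{1}_{S_k}\bigr)
  =\psi^{\boxtimes k}\boxtimes\mathbf{1}_{S_k},
\end{equation*}
where on the right the $S_k$-factor permutes the tensor factors of $\psi^{\boxtimes k}$. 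Both sides have degree $[G:H]^k$, so the content is to match the $S_k$-actions. I would prove this by fixing a transversal of $H$ in $G$ and using it to build compatible bases — the permutation basis of $\operatorname{Ind}_H^G\theta$ indexed by cosets and the induced basis of $\operatorname{Ind}_{H\wr S_k}^{G\wr S_k}$ indexed by $k$-tuples of cosets — and checking that the block-permutation action on the left coincides with the tensor-factor permutation on the right.

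The main obstacle is exactly this wreath-product lemma, and within it the verification that the $S_k$-action built into $\widetilde{\rho_{(m^k)}}$, namely the action of the block permutations in $N_{S_n}(S_\lambda)$ on $\mathbb{C}S_\lambda e_L^L$, agrees with the canonical tensor-factor permutation and hence with the $S_k$-action produced by the induction on the left-hand side. This forces one to track the extension $\widetilde{\rho_\lambda}$ of $\rho_\lambda$ concretely through the module $\mathbb{C}S_\lambda e_L^L$ rather than merely abstractly, since an extension to $N_{S_n}(S_\lambda)$ is only determined up to a linear character of the quotient $\prod_m S_{\lambda(m)}$; pinning down the correct extension is precisely where the multiplicativity of the quasi-idempotents $e_L^L$ enters. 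Once the $S_k$-actions are matched, the theorem follows by combining the base case, the wreath-product identity, and the reduction of the first paragraph.
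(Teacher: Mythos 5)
The paper itself contains no proof of this theorem: it is quoted verbatim from \cite{douglasspfeifferroehrle:cohomology} (``The next theorem is proved in\dots''), so the only meaningful comparison is with that reference, whose Lehrer--Solomon-style argument your proposal essentially reconstructs --- factoring both sides over the distinct part sizes, settling the one-cycle case by identifying the top component $\mathbb{C}S_m e_S^S$ with the multilinear component of the free Lie algebra and invoking Klyachko's theorem, and handling repeated parts by matching the $S_k$-action coming from conjugation by the block permutations $x_i$ on $\bigl(\mathbb{C}S_m e_S^S\bigr)^{\otimes k}$ with the one produced by induction from $Z_m\wr S_k$, which is exactly where the extension $\widetilde{\rho_\lambda}$ of $\rho_\lambda$ gets pinned down. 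Your proof is correct, including the wreath-product identity $\operatorname{Ind}_{H\wr S_k}^{G\wr S_k}\bigl(\theta^{\boxtimes k}\boxtimes\mathbf{1}\bigr)=\bigl(\operatorname{Ind}_H^G\theta\bigr)^{\boxtimes k}\boxtimes\mathbf{1}$, which a coset-transversal computation verifies as you indicate. The only caveat is that the two inputs you assert rather than prove --- multiplicativity of the Bergeron--Bergeron--Howlett--Taylor quasi-idempotents over the direct-product decomposition of $S_\lambda$, and the identification of the top BBHT component of $\mathbb{C}S_m$ with the Lie module --- carry the real content of the base case; both are known results, but a self-contained write-up should cite them precisely.
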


\subsection{The characters \texorpdfstring{$\varphi^B_{\mu}$}{}}

A \emph{signed partition} is a composition
\[
\mu=(\mu_1^-, \dots, \mu^-_a, \mu^+_1, \dots ,\mu^+_b),
\]
where $\mu_1^-\leq \dotsm \leq \mu^-_a$ and $\mu_1^+\geq \dotsm \geq
\mu^+_b$. Then
\[
\mu^-= (\mu_a^-, \dots, \mu^-_1)\qquad \text{and} \qquad \mu^+= (\mu^+_1,
\dots ,\mu^+_b)
\]
are partitions. We use this labeling convention for compatibility
with~\cite{geckpfeiffer:characters} and the \GAP\ functions described below,
where the conjugacy class representatives are chosen to have minimal
length. If $|\mu^-|+| \mu^+|=n$, then $\mu$ is a signed partition of $n$ and
we write $\mu \Vdash n$.

A \emph{signed permutation} of $n$ is a permutation $w\colon \pm [n]\to \pm
[n]$ such that $w(-i)=-w(i)$ for $1\leq i\leq n$. Signed permutations will
be identified with their restrictions to functions $[n]\to \pm [n]$ without
comment. Let $W_n$ denote the group of all signed permutations of $n$. Let
$S$ denote the set of transpositions $s_i= ( i\ i+1)$ for $1\leq i\leq n-1$,
together with the signed permutation $t$ defined by $t(1)=-1$ and $t(i)=i$
for $2\leq i\leq n$. Then $(W_n, S)$ is a Coxeter system of type $B_n$.

For a signed partition $\mu= (\mu_1^-, \dots, \mu^-_a, \mu^+_1, \dots
,\mu^+_b)$ of $n$ define $W_\mu$ to be the subgroup
\[
W_{\mu_1^-} \times \dotsm \times W_{\mu_a^-} \times S_{\mu_1^+} \times
\dotsm \times S_{\mu_b^+}
\]
of $W_n$. Here, a similar identification to that for the embedding of
$S_\lambda= S_{\lambda_1^+} \times \dotsm \times S_{\lambda_a}$ in $S_n$ is
used. Thus, $W_{\mu_1^-}$ acts on $\{1, \dots, \mu^-_1\}$, $W_{\mu_2^-}$
acts on $\{\mu_1^-+ 1, \dots, \mu_1^-+\mu^-_2\}$, \dots, $S_{\mu_1^+}$ acts
on $\{|\mu^-|+1, \dots, |\mu^-|+ \mu_1^+\}$, and so on.

The conjugacy classes of parabolic subgroups of $W_n$, and hence the set of
shapes of $W_n$, are parametrized by the set of partitions of $m$ for $0\leq
m\leq n$. Suppose $0\leq m\leq n$ and $\lambda$ is a partition of
$m$. Define $\mu_\lambda$ to be the signed partition $(\, (n-m), \lambda)$
of $n$ and define $W_\lambda= W_{\mu_\lambda}$. Then the subgroups
$W_\lambda$ for $\lambda$ a partition of $m$ with $0\leq m\leq n$ form a set
of representatives for the conjugacy classes of parabolic subgroups of
$W_n$.

The set of signed partitions of $n$ indexes the conjugacy classes in $W_n$
as follows. Let $\mu= (\mu_1^-, \dots, \mu^-_a, \mu^+_1, \dots ,\mu^+_b)$ be
a signed partition of $n$. For $1\leq i\leq a$, define $c_i$ in $W_n$ by
\[
c_i(v)=
\begin{cases}
  v+1 &\text{if $v=u+1, \dots, u+\mu^-_i-1$,} \\
  -(u+1)&\text{if $v=u+\mu^-_i$,} \\
  v &\text{if $v\in [n]\setminus \{u+1, \dots, u+\mu^-_i\}$,}
\end{cases} 
\quad\text{where}\quad u = \sum_{k=1}^{i-1} \mu^-_k.
\]
Then $c_i$ is a negative $\mu^-_i$-cycle in the direct factor $W_{\mu^-_i}$
of $W_\mu$. Similarly, for $1\leq j\leq b$ define $d_j$ in $W_n$ by
\begin{align*}
  d_j(v)&=
  \begin{cases}
    v+1 & \text{if $v=u+1, \dots, u+\mu^+_j-1$,} \\
    u+1& \text{if $v=u+\mu^+_j$,} \\
    v & \text{if $v\in [n]\setminus \{u+1, \dots, u+\mu^+_j\}$,}
  \end{cases} 
\text{ where } u= |\mu^-|+\sum_{k=1}^{j-1} \mu^+_k 
\end{align*}
Then $d_j$ is a positive $\mu^+_j$-cycle in the direct factor $S_{\mu^+_j}$
of $W_\mu$. Finally, define
\[
w_\mu= c_1\dotsm c_a d_1\dotsm d_b.
\]
Then $\{\, w_\mu\mid \mu \Vdash n\,\}$ is a set of conjugacy class
representatives in $W_n$.

For a signed partition $\mu$ of $n$, define $\mubar$ to be the signed
partition $(\, (|\mu^-|), \mu^+)$ of $n$. Then for a partition $\lambda$ of
$m$ with $0\leq m\leq n$, $\{\, w_{\mu}\mid \mubar= \mu_\lambda\,\}$ is a
set of representatives of the cuspidal conjugacy classes in the parabolic
subgroup $W_{\lambda}$.

Suppose $\lambda$ is a partition of $m$ with $0\leq m\leq n$ and $\mu$ is a
signed partition such that $\mubar=\mu_\lambda$. For each $i$ such that
$\mu^-_i=\mu^-_{i+1}$ define $x_i$ in $W_n$ by
\[
x_i(v)=
\begin{cases}
  v+\mu^-_i & \text{if $v=u+1, \dots, u+\mu^-_i$,} \\
  v-\mu^-_i& \text{if $v=u+\mu^-_i+1, \dots, u+2\mu^-_i$,} \\
  v &\text{if $v\in [n]\setminus \{u+1, \dots, u+2\mu^-_i\}$,}
\end{cases} 
\quad\text{where}\quad u = \sum_{k=1}^{i-1} \mu^-_k.
\]
Then conjugation by $x_i$ permutes $\{c_1, \dots, c_a ,d_1, \dots, d_b \}$
by exchanging the negative cycles $c_i$ and $c_{i+1}$ and hence $x_i$
centralizes $w_\mu$. Next, for each $j$ such that $\mu^+_j=\mu^+_{j+1}$
define $y_j$ in $W_n$ by
\begin{multline*}
  y_j(v)= \begin{cases}
    v+\mu^+_j &\text{if $v=u+1, \dots, u+\mu^+_j$,} \\
    v-\mu^+_j&\text{if $v=u+\mu^+_j+1, \dots, u+2\mu^+_j$,} \\
    v &\text{if $v\in [n]\setminus \{u+1, \dots, u+2\mu^+_j\}$,}
  \end{cases} 
\quad\text{where } u = m+\sum_{k=1}^{j-1} \mu^+_k.
\end{multline*}
Then conjugation by $y_j$ permutes $\{ c_1, \dots, c_a, d_1, \dots, d_b\}$
by exchanging the positive cycles $d_j$ and $d_{j+1}$ and hence $y_j$
centralizes $w_\mu$. Finally, for $1\leq j\leq b$ define $r_j$ in $W_n$ by
\[
r_j(v)= \begin{cases}
  -v&\text{if $v=u+1, \dots, u+\mu^+_j$,} \\
  v &\text{if $v\in [n]\setminus \{u+1, \dots, u+\mu^+_j\}$,}
  \end{cases}
  \quad\text{where} \quad u =
    m+\sum_{k=1}^{j-1} \mu^+_k.
\]
Then $r_j$ centralizes $c_i$ for $1\leq i\leq a$ and $d_k$ for $1\leq k\leq
b$ and hence centralizes $w_\mu$.

It is not hard to see that $C_{W_{\lambda}} (w_\mu)$ is generated by the
elements $c_i$, for $1\leq i\leq a$; $d_j$, for $1\leq j\leq b$; and $x_i$,
for $1\leq i\leq a$ such that $\mu^-_i=\mu^-_{i+1}$. By
\cite[Proposition~4.4]{konvalinkapfeifferroever:centralizers}, the elements
$r_j$, for $1\leq j\leq b$, and $y_j$, for $1\leq j\leq b$ with
$\mu^+_j=\mu^+_{j+1}$, generate a complement to $C_{W_\lambda} ( w_\mu)$ in
$C_{W_n} (w_\mu)$.

With the preceding notation, define the following.
Whenever $i$ is a part of $\mu^-$ define $\mu^-(i)= \left| \{\,k \mid
    \mu^-_k=i\,\} \right|$ and $\ibar =\min \{\, k\mid
  \mu^-_k=i\,\}$. Similarly, whenever $j$ is a part of $\mu^+$ define
  $\mu^+(j)= \left| \{\,k \mid \mu^+_k=j\,\} \right|$ and $\jbar=\min \{
  k\mid \mu^+_k=j \}$.  

\begin{lemma}\label{lem:b}
Let 
  \begin{multline*}
    X_\mu = \{\, c_{\ibar}\mid i\in \mu^-\,\} \amalg \{\, x_{\ibar}\mid
    \mu^-( i) >1 \,\} \amalg \{\,d_{\jbar} \mid j\in \mu^+ \,\} \amalg \{\,
    y_{\jbar }\mid \mu^+ (j) >1 \,\} \amalg \{\,r_{\jbar} \mid j\in\mu^+
    \,\}.
  \end{multline*}
  Suppose $\psi\colon X_\mu \to \BBC^{\times}$ satisfies
  \begin{enumerate}
  \item $\psi(c_i)$ is a ${2\mu^-_i}$-th root of unity for all $c_i \in X_\mu$,
  \item $\psi(x_i)^2 = 1$ for all $x_i \in X_\mu$,
  \item $\psi(d_j)$ is a ${\mu^+_j}$-th root of unity for all $d_i \in X_\mu$,
  \item $\psi(y_j)^2 = 1$ for all $y_i \in X_\mu$, and
  \item $\psi(r_i)^2 = 1$ for all $r_i \in X_\mu$.
  \end{enumerate}
  Then $\psi$ has a unique extension to a linear character of
  $C_{W_n}(w_\mu)$. Moreover, every linear character of
  $C_{W_n}(w_\mu)$ arises in this way.
\end{lemma}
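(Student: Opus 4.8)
The plan is to exploit the standard fact that the linear characters of a finite group $G$ are exactly the characters of its abelianization $G/[G,G]$, and that characters of a direct product factor through the product of the abelianizations. So the whole argument reduces to writing $C_{W_n}(w_\mu)$ as an explicit direct product of wreath products, computing the abelianization of each factor, and matching the resulting cyclic generators with the elements of $X_\mu$ and the conditions (1)--(5).

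First I would record the group structure already laid out above. Grouping the cycles of $w_\mu$ according to type and length, the generating set consisting of the $c_i$, $d_j$, $x_i$, $y_j$, and $r_j$ exhibits $C_{W_n}(w_\mu)$ as a direct product, taken over the distinct part lengths of $\mu^-$ and $\mu^+$, of wreath products. For a part $\ell$ occurring in $\mu^-$ with multiplicity $m=\mu^-(\ell)$, the factor is $\langle c\rangle \wr S_m\cong (\BBZ/2\ell\BBZ)\wr S_m$, where the base is generated by the negative cycles $c_i$ (each of order $2\ell$) and the top $S_m$ is generated by the $x_i$. For a part $\ell$ occurring in $\mu^+$ with multiplicity $m=\mu^+(\ell)$, the factor is $(\langle d\rangle\times\langle r\rangle)\wr S_m\cong(\BBZ/\ell\BBZ\times\BBZ/2\BBZ)\wr S_m$; here one checks that the positive cycle $d$ and the negation $r$ commute and generate their direct product, that the $y_j$ permute the $m$ factors diagonally (exchanging $d_j\leftrightarrow d_{j+1}$ and $r_j\leftrightarrow r_{j+1}$), and that factors attached to different part lengths have disjoint support and hence commute.

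The heart of the argument is the abelianization of a single wreath product $A\wr S_m$ with $A$ abelian. The subgroup $[A\wr S_m,A\wr S_m]$ contains the commutator subgroup of the top group together with all elements $a\,(\sigma\!\cdot\! a)^{-1}$ coming from the permutation action on the base; quotienting identifies the $m$ base coordinates to a single copy of $A$ via the product map and collapses $S_m$ to its sign. Thus $(A\wr S_m)^{\mathrm{ab}}\cong A$ when $m=1$ and $(A\wr S_m)^{\mathrm{ab}}\cong A\times\BBZ/2\BBZ$ when $m\geq2$, with a base generator mapping to the corresponding generator of $A$ with its order unchanged and a transposition mapping to the generator of the sign factor. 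Applying this to each factor identifies $C_{W_n}(w_\mu)^{\mathrm{ab}}$ with the direct product of cyclic groups generated by the images of $c_{\ibar}$ (order $2\mu^-_i$), $x_{\ibar}$ (order $2$, present exactly when $\mu^-(i)>1$), $d_{\jbar}$ (order $\mu^+_j$), $r_{\jbar}$ (order $2$), and $y_{\jbar}$ (order $2$, present exactly when $\mu^+(j)>1$) -- that is, by the images of the elements of $X_\mu$, with orders matching conditions (1)--(5) precisely.

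It then follows formally that a map $\psi\colon X_\mu\to\BBC^\times$ extends to a linear character of $C_{W_n}(w_\mu)$ if and only if it respects these orders, namely satisfies (1)--(5); the extension is unique because $X_\mu$ generates the abelianization, and conversely every linear character restricts to such a $\psi$. I expect the main obstacle to be the wreath-product abelianization together with the bookkeeping needed to verify that the cyclic factors have exactly the stated orders -- in particular that for positive parts $d$ and $r$ generate a genuine direct product $\BBZ/\ell\BBZ\times\BBZ/2\BBZ$, so that $r$ contributes an independent factor of order $2$ rather than some power of $d$, and that the product map on each base does not lower the order of the image of $c_{\ibar}$ or $d_{\jbar}$. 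Because distinct part lengths have disjoint support there are no cross-relations between the factors, so no further subtleties arise, and the type $A$ lemma stated earlier is the special case in which $\mu^-$ is empty and all $r_j$, hence condition~(5), are dropped.
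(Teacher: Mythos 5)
Your proposal is correct and takes essentially the same route as the paper: both arguments reduce the lemma to computing the abelianization of $C_{W_n}(w_\mu)$, decompose the centralizer as a direct product of factors indexed by the distinct part lengths of $\mu^-$ and $\mu^+$ (the paper cites Konvalinka--Pfeiffer--R\"over \S4.2 for this isomorphism, where you verify it from the listed generators), and then compute the abelianization of each factor, matching generators and orders with $X_\mu$ and conditions (1)--(5). The only cosmetic difference is that you package the positive-part factors as $(Z_j\times Z_2)\wr S_m$ while the paper writes them as $(Z_j)^m\rtimes W_m$ with the hyperoctahedral group acting through its quotient $S_m$; these are the same group, and both computations give $Z_j\times Z_2\times Z_2$.
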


\begin{proof}
  In this proof we set $C= C_{W_n}(w_\mu)$ and let $Z_{m}$ denote the
  cyclic group of order $m$. If $\lambda$ is a composition, we write $j\in
  \lambda$ when some part of $\lambda$ is equal $j$. 

  To prove the lemma, it is enough to show that the abelianization
  $C/[C,C]$ is isomorphic to
  \[ 
  \bigg(\prod_{i\in \mu^-} Z_{ 2j} \bigg) \times \bigg( \prod_{\mu^-( i)
    >1} Z_{2} \bigg) \times \bigg( \prod_{j \in \mu^+} Z_{j} \bigg) \times
  \bigg( \prod_{\mu^+( j) >1} Z_{2} \bigg) \times \bigg( \prod_{j\in \mu^+}
  Z_{2} \bigg),
  \]
  and is generated by the image of $X_\mu$.

  It is straightforward to check that when $S_m$ acts on $(Z_{u})^m$ by
  permuting the factors, the abelianization of the semidirect product
  $(Z_{u})^m \rtimes S_m$ is isomorphic to $Z_{u} \times Z_{2}$, and is
  generated by the image of a generator of the first direct factor in the
  product $(Z_{u})^m$ and the image of the transposition $s_1$. Taking
  $u=2$ we have in particular that the abelianization of $W_m$ is isomorphic
  to $Z_{2} \times Z_{2}$, and is generated by the images of $t$ and
  $s_1$.

  Similarly, when $W_m$ acts on $(Z_{u})^m$ by permuting the factors (so
  the generator $t$ and its conjugates act trivially), it is straightforward
  to check that the abelianization of the semidirect product $(Z_{u})^m
  \rtimes W_m$ is isomorphic to $Z_{u} \times Z_{2} \times Z_{2}$, and is
  generated by the image of a generator of the first direct factor in the
  product $(Z_{u})^m$, the image of the transposition $s_1$, and the image
  of $t$.

  It follows from the description of the centralizer $C$ in \cite[\S4.2]{konvalinkapfeifferroever:centralizers} that
  \begin{equation}
    \label{CommutatorQuotient}
    C\cong
    \prod_{i\in\mu^-} \left( (Z_{2i})^{\mu^-(i)} \rtimes S_{\mu^-(i)} \right)
    \times \prod_{j\in\mu^+} \left( (Z_{j})^{\mu^+(j)} \rtimes W_{\mu^+(j)}
    \right), 
  \end{equation}
  so to complete the proof it suffices to consider the abelianizations of
  the direct factors of \autoref{CommutatorQuotient} individually. It
  follows from the remarks above that for $i\in \mu^-$, the abelianization
  of $(Z_{2i})^{\mu^-(i)} \rtimes S_{\mu^-(i)}$ is isomorphic to $Z_{2i}
  \times Z_{2}$, and is generated by the images of $c_{\ibar}$ and
  $x_{\ibar}$. Similarly, for $j\in \mu^+$ the abelianization of
  $(Z_{j})^{\mu^+(j)} \rtimes W_{\mu^+(j)}$ is isomorphic to $Z_{j} \times
  Z_{2}\times Z_{2}$, and is generated by the images of $d_{\jbar}$,
  $y_{\jbar}$, and $r_{\ibar}$.
\end{proof}

For a signed partition $\mu= (\mu_1^-, \dots, \mu^-_a, \mu^+_1, \dots
,\mu^+_b)$ of $n$ let $\varphi_{\mu}^B$ be the character of $C_{W_n}(w_\mu)$
defined (as in the preceding lemma) by
\begin{itemize}
\item $\varphi_{\mu}^B(c_i) =\zeta_{2k}$ where $\mu_i^-=2^lk$ with $k$
  odd, for $1\leq i\leq a$,
\item $\varphi_{\mu}^B(d_j) =\zeta_{|d_j|}$ for $1\leq j\leq b$,
\item $\varphi_{\mu}^B(x_i) =-1$ for all $i$ such that
  $\mu^-_i =\mu^-_{i+1}$,
\item $\varphi_{\mu}^B(y_j) =1$ for all $j$ such that $\mu^+_j
  =\mu^+_{j+1}$, and
\item $\varphi_{\mu}^B(r_j) = (-1)^{\mu^+_j-1}$ for $1\leq j\leq b$.
\end{itemize}

\begin{theorem}\label{thm:b}
  Suppose $\lambda$ is a partition of $m$ with $0\leq m\leq n$ and let
  $\widetilde{\rho_\lambda}$ be the character of $N_{W_n}(W_\lambda)$
  afforded by the top component of $\BBC W_\lambda$. Then
  \[
  \widetilde{\rho_\lambda}= \sum_{\mubar=\mu_\lambda} \operatorname{Ind}_{
    C_{W_n} (w_\mu)}^{ N_{W_n}(W_\lambda)} \varphi_\mu^B
  \]
  for $n\leq 8$.
\end{theorem}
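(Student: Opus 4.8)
My plan is first to confirm that each $\varphi_\mu^B$ is a genuine linear character, and then to establish the character identity computationally as in \autoref{sec:state}. For the former I would invoke \autoref{lem:b}, checking its five conditions against the values defining $\varphi_\mu^B$. Conditions (2), (4), and (5) are immediate, since the prescribed values $-1$ on $x_i$, $1$ on $y_j$, and $(-1)^{\mu^+_j-1}$ on $r_j$ all square to $1$; condition (3) holds because $\varphi_\mu^B(d_j)=\zeta_{|d_j|}=\zeta_{\mu^+_j}$ is a $\mu^+_j$-th root of unity by construction. The only delicate point is (1): writing $\mu^-_i=2^l k$ with $k$ odd and setting $\varphi_\mu^B(c_i)=\zeta_{2k}$, we have $2k \mid 2^{l+1}k = 2\mu^-_i$, so $\zeta_{2k}$ is a $2\mu^-_i$-th root of unity. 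Thus $\varphi_\mu^B$ extends uniquely to a linear character of $C_{W_n}(w_\mu)$.

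Both sides of the identity are characters of $N_{W_n}(W_\lambda)$, so it suffices to compare them as class functions. For the left-hand side I would follow \hyperref[Step1]{Step~1}: read off the top component character $\rho_\lambda$ of $\BBC W_\lambda$ from the \texttt{ECharacters} routine and extend it to $N_{W_n}(W_\lambda)$ by the method of~\cite{bishopdouglasspfeifferroehrle:computationsII}. For the right-hand side I would first enumerate the indexing set: the signed partitions $\mu$ with $\mubar=\mu_\lambda$ are exactly those with $\mu^+=\lambda$ and $\mu^-$ an arbitrary partition of $n-m$, matching the decomposition $W_\lambda=W_{n-m}\times S_\lambda$ of the parabolic and the fact that its cuspidal classes are products of the unique cuspidal class in each $S_{\lambda_i}$ with a negative (elliptic) class of $W_{n-m}$. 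For each such $\mu$ I would build $C_{W_n}(w_\mu)$ and $\varphi_\mu^B$ from the explicit generators, induce to $N_{W_n}(W_\lambda)$ via the induced-character formula, and sum. A useful first check is that the degrees agree, namely $\widetilde{\rho_\lambda}(1)=\sum_{\mubar=\mu_\lambda}[N_{W_n}(W_\lambda):C_{W_n}(w_\mu)]$; the full verification then compares values on every conjugacy class of $N_{W_n}(W_\lambda)$, carried out for each $\lambda\vdash m$ with $m\leq n$ and every $n\leq 8$.

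The main obstacle is the left-hand side together with the lack of a uniform reduction in $n$. Here $\widetilde{\rho_\lambda}$ is an \emph{extension} of $\rho_\lambda$ to the full normalizer, and $N_{W_n}(W_\lambda)$ does not split as a direct product over the blocks of $W_\lambda$: it permutes equal parts among the factors $S_{\lambda_i}$ and incorporates the sign changes $r_j$. Consequently the natural idea of factoring the problem into the symmetric-group pieces (settled in all ranks by \autoref{thm:dpr}) and a purely elliptic type-$B$ statement does not by itself close the argument, since the induction and the normalizer action entangle these pieces. Arranging the normalizer extension and the summed induced characters to coincide is exactly the content of the theorem, and it is this absence of a clean factorization that forces the statement to be verified only for $n\leq 8$ rather than proved for all $n$.
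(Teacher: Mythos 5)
Your proposal is correct and takes essentially the same route as the paper: the paper also defines $\varphi_\mu^B$ via \autoref{lem:b} and then proves \autoref{thm:b} by machine verification in \GAP\ with \CHEVIE\ and \ZigZag, computing $\widetilde{\rho_\lambda}$ as described in \autoref{sec:state} and the sum of characters induced from the explicitly generated centralizers $C_{W_n}(w_\mu)$ (via the functions {\tt Lambda2Character} and {\tt BCharacter}), then comparing the two class functions for every $\lambda$ and every $n\leq 8$. Your explicit check of the hypotheses of \autoref{lem:b} and your degree sanity check are small refinements of, not departures from, that argument.
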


We have verified \autoref{thm:b} using the \GAP\ computer algebra system
\cite{gap3} with the \CHEVIE\ \cite{chevie} and \ZigZag\
\cite{zigzag} packages to compute both sides of the equality in the
theorem. The computation of the character $\widetilde{\rho_\lambda}$ of
$N_{W_n}(W_\lambda)$ was described in \autoref{sec:state}. The sum was
computed with the help of some \GAP\ functions. First, we defined a function 
\[
\text{\tt Lambda2Character( mu, cval, dval, xval, yval, rval )}
\] 
that takes a signed partition {\tt mu} and the character values as
in~\autoref{lem:b}, given as functions from the set of parts of $\mu$ to the
cyclotomic field, and returns the character $\psi$ of
$C_{W_n}(w_\mu)$. Second, 
we defined a function {\tt BCharacter( mu )} that takes a
signed partition, evaluates {\tt Lambda2Character} at appropriate
values, and returns the linear character $\varphi_\mu^B$ of
$C_{W_n}(w_\mu)$. 
For any given partition $\lambda$ of $m$
with
$0\leq m\leq n$, one can then use the characters  $\varphi_\mu^B$ 
with $\mubar=\mu_\lambda$ to compute  
 the sum of induced characters
$\sum_{\mubar=\mu_\lambda} \operatorname{Ind}_{ C_{W_n} (w_\mu)}^{
  N_{W_n}(W_\lambda)} \varphi_\mu^B$.

It is tempting to speculate 
that the characters $\{\, \varphi_\mu^B\mid \mu\Vdash n,\
\mubar= \mu_\lambda\,\}$ satisfy the conclusion of~\autoref{thm:1} for the
parabolic subgroup $W_{\lambda}$ of $W_n$ for all $n\ge 2$.

A similar decomposition of the regular character of the
Coxeter group of type $B_n$ 
into characters that are induced from linear characters
of element centralizers has been suggested by Bonnaf\'e
\cite[\S 10, Ques.\ (6)]{bonnafe}.
However, a straightforward calculation
shows that his decomposition is different from ours,
even for $n = 2$.

\subsection{The characters \texorpdfstring{$\varphi^D_{\mu}$}{}}

We regard the Coxeter group of type~$D_n$ for $n\geq 4$ as the reflection
subgroup $W'_n$ of the Coxeter group $W_n$ consisting of signed permutations
with an even number of sign changes, so
\[
W'_n=\{\, w\in W_n\mid \text{$|\{\, i\in [n] \mid w(i)<0\,\}| \in
  2\BBN$}\,\} .
\]
Set $t'= s_1ts_1$. Then $W'_n$ is a reflection subgroup of $W_n$ with Coxeter
generating set $S'=\{t', s_1, \dots, s_{n-1}\,\}$.

The shapes of $W'$ were determined in
\cite[Proposition~2.3.13]{geckpfeiffer:characters} as follows. First, if
$\lambda$ is a partition of $m$ with $m\leq n-2$, or if $\lambda$ is a
partition of $n$ containing at most one odd part, set $W'_\lambda=W'_n\cap
W_\lambda$. Second, if $\lambda$ is a partition of $n$ with all even parts,
then $\lambda$ indexes two conjugacy classes. One is represented by
$W'_{\lambda^+}= S_\lambda$ and the other is represented by $W'_{\lambda^-}
= t S_{\lambda}t$. 

Suppose that $\mu$ is a signed partition of $n$. If $c$ is a negative cycle,
then $\{\, i\in [n] \mid c(i)<0\,\}$ has an odd number of elements and so
$w_\mu= c_1\dotsm c_a d_1\dotsm d_b$ lies in $W'_n$ if and only if $a$ is
even, that is, if and only if $\mu^-$ has an even number of parts. Notice
that the individual negative cycles $c_i$ do not lie in $W'_n$, but the
products $c_ic_k$ do lie in $W'_n$, and hence in $C_{W'_n}(w_\mu)$.

Now suppose that $\lambda$ is a partition of $m$ with $0<m\leq n-2$, or that
$\lambda$ is a partition of $n$ with at least one odd part. Then $\{\, w_\mu
\mid \mubar= \mu_\lambda,\ l(\mu^-)\in 2\BBN\,\}$ is a complete set of
representatives for the cuspidal conjugacy classes in $W'_\lambda$. If
$\lambda$ is a partition of $n$ with all parts even, then the element
$w_\lambda$ in $S_\lambda$ represents the unique cuspidal class in
$W'_{\lambda^+}$ and the element $tw_\lambda t$ represents the unique
cuspidal class in $W'_{\lambda^-}$ (see
\cite[Proposition~3.4.12]{geckpfeiffer:characters}). Furthermore, because
$C_{W'_n}\left(w\right) =W'_n\cap C_{W_n} (w)$ for $w$ in $W'_n$, we can
define linear characters of $C_{W'_n}(w)$ simply by restricting characters
of $C_{W_n}(w)$.

The conclusion of~\autoref{thm:1} has been shown to hold whenever $W_L$ is a
product of symmetric groups
in~\cite{douglasspfeifferroehrle:cohomology}. Thus, to simplify the
exposition, in the following we consider only the parabolic subgroups
$W_\lambda$ where $\lambda$ is a partition of $m$ with $0\leq m\leq n-2$.

Suppose $\mu$ is a signed partition of $n$ such that $l(\mu^-)$ is even. It
follows from~\autoref{lem:b} that there is a linear character
$\psi_\mu$ of $C_{W_n}(w_\mu)$ such that
\begin{itemize}
\item $\psi_\mu(c_i) =\zeta_{|c_i|}$ for $1\leq i\leq a$,
\item $\psi_\mu(d_j) =\zeta_{|d_j|}$ for $1\leq j\leq b$,
\item $\psi_\mu(x_i) =-1$ for all $i$ such that $\mu^-_i=\mu^-_{i+1}$,
\item $\psi_\mu(y_j) =1$ for all $j$ such that $\mu^+_j=\mu^+_{j+1}$, and
\item $\psi_\mu(r_j) = -1$ for $1\leq j\leq a$.
\end{itemize}
Define $\varphi^D_\mu$ to be the restriction of $\psi_\mu$ to
$C_{W'_n}(w_\mu)$. As we have already observed, the individual negative
cycles $c_i$ do not lie in $W_n'$. Similarly, if $\mu^+_j$ is odd, then
$r_j$ is not in $W_n'$. We have
\begin{itemize}
\item $\varphi^D_\mu (c_ic_k) =\zeta_{|c_i|} \zeta_{|c_k|}$ for $1\leq
  i,k\leq a$,
\item $\varphi^D_\mu (d_j) =\zeta_{|d_j|}$ for $1\leq j\leq b$,
\item $\varphi^D_\mu(x_i) =-1$ for all $i$ such that $\mu^-_i=\mu^-_{i+1}$,
\item $\varphi^D_\mu(y_j) =1$ for all $j$ such that $\mu^+_j=\mu^+_{j+1}$, and
\item $\varphi^D_\mu (r_j) = -1$ for $1\leq j\leq b$ such that $\mu^+_j$ is
  even, and
\item $\varphi^D_\mu (r_i r_k) = 1$ for $1\leq i,k\leq b$ such that $\mu^+_i$ and $\mu^+_k$ are odd.
\end{itemize}

\begin{theorem}\label{thm:d}
  Suppose $\lambda$ is a partition of $m$ with $0\leq m\leq n-2$ and let
  $\widetilde{\rho_\lambda}$ be the character of $N_{W_n'}(W_\lambda')$
  afforded by the top component of $\BBC W_\lambda'$. Then
  \[
  \widetilde{\rho_\lambda}= \sum_{\substack{\mubar=\mu_\lambda\\ l(\mu^-)\in
      2\BBN}} \operatorname{Ind}_{ C_{W_n'} (w_\mu)}^{ N_{W_n'}(W_\lambda')}
  \varphi_\mu^D
  \]
  for $n\leq 8$.
\end{theorem}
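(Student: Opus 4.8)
The plan is to establish the identity by explicit computation for each $n$ with $n\le 8$, in exactly the manner used for~\autoref{thm:b}. The left-hand side $\widetilde{\rho_\lambda}$, the character of $N_{W_n'}(W_\lambda')$ afforded by the top component of $\BBC W_\lambda'$, is obtained by the methods recalled in~\autoref{sec:state}: one first extracts $\rho_\lambda$ from the {\tt ECharacters} function of the \ZigZag\ package applied to the Coxeter group of type $D_n$, and then extends it to a character of $N_{W_n'}(W_\lambda')$ as described there. It then remains to assemble the right-hand side and to check that the two characters of $N_{W_n'}(W_\lambda')$ coincide.

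To build the right-hand side I would proceed as follows. For each signed partition $\mu$ with $\mubar=\mu_\lambda$ and $l(\mu^-)$ even, \autoref{lem:b} produces the linear character $\psi_\mu$ of $C_{W_n}(w_\mu)$ with the values listed before the theorem, and $\varphi_\mu^D$ is defined as its restriction to $C_{W_n'}(w_\mu)=W_n'\cap C_{W_n}(w_\mu)$. Since $\psi_\mu$ is already a homomorphism into $\BBC^\times$, its restriction is automatically a linear character and no separate well-definedness check is needed; one has only to record its values on a generating set of $C_{W_n'}(w_\mu)$. Because the individual negative cycles $c_i$ and the involutions $r_j$ attached to odd parts $\mu^+_j$ fail to lie in $W_n'$, such a generating set is obtained from that of $C_{W_n}(w_\mu)$ by replacing those elements with the products $c_ic_k$ and $r_ir_k$, which gives precisely the list of values for $\varphi_\mu^D$ displayed above. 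Each $\varphi_\mu^D$ is then induced from $C_{W_n'}(w_\mu)$ to $N_{W_n'}(W_\lambda')$ and the results are summed over the admissible $\mu$, using \GAP\ routines analogous to the {\tt Lambda2Character} and {\tt BCharacter} functions employed for type $B$.

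The main obstacle is the bookkeeping forced by passing from $W_n$ to its index-two subgroup $W_n'$. One must identify $C_{W_n'}(w_\mu)$ correctly as $W_n'\cap C_{W_n}(w_\mu)$ together with a generating set adapted to $W_n'$, and likewise compute $N_{W_n'}(W_\lambda')$ directly rather than by intersecting $N_{W_n}(W_\lambda)$ with $W_n'$. The parametrization of the cuspidal classes of $W_\lambda'$ recalled before the theorem guarantees that the $\mu$ appearing in the sum are exactly the required class representatives, and the restriction $0\le m\le n-2$ is what lets us avoid the delicate splitting of a partition of $n$ with all even parts into the two shapes $\lambda^\pm$, so that only the uniform case occurs. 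With these ingredients in place the verification is a finite, if sizable, computation, and the equality is confirmed by comparing character values class by class for every such $\lambda$ and every $n\le 8$.
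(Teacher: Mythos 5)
Your proposal matches the paper's own proof: Theorem~\ref{thm:d} is established there by exactly this finite \GAP/\CHEVIE/\ZigZag\ computation --- computing $\widetilde{\rho_\lambda}$ as in \autoref{sec:state}, building $\psi_\mu$ via {\tt Lambda2Character} (the paper wraps this in a function {\tt DCharacter}), restricting to $C_{W_n'}(w_\mu)=W_n'\cap C_{W_n}(w_\mu)$ to obtain $\varphi_\mu^D$, then inducing, summing over the admissible $\mu$, and comparing characters. Your additional remarks (why restriction needs no well-definedness check, why the range $0\le m\le n-2$ sidesteps the split shapes $\lambda^{\pm}$, and the care needed with normalizers and centralizers inside the index-two subgroup) are consistent with the paper's setup.
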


The verification of ~\autoref{thm:d} parallels the verification
of~\autoref{thm:b}. The computation of the character
$\widetilde{\rho_\lambda}$ of $N_{W_n}(W_\lambda)$ was described in
\autoref{sec:state}. The sum was computed with the help of further 
\GAP\  functions. We defined
a function {\tt DCharacter( mu )} that takes a signed partition 
{\tt mu}, evaluates the above function 
{\tt Lambda2Character} at appropriate values, and returns the
linear character 
$\psi_\mu$ of $C_{W_n}(w_\mu)$.
Restriction  to
$C_{W_n'}(w_\mu)$ yields  the linear character $\varphi_\mu^D$.
For any given partition $\lambda$ of $m$
with
$0\leq m\leq n-2$, one can then use the characters  $\varphi_\mu^D$ 
with $\mubar=\mu_\lambda$ to compute  
the sum of induced characters $\sum_{\substack{\mubar=\mu_\lambda\\
    l(\mu^-)\in 2\BBN}} \operatorname{Ind}_{ C_{W_n'} (w_\mu)}^{
  N_{W_n'}(W_\lambda')} \varphi_\mu^D$.

It is tempting to speculate 
that the characters $\varphi_\mu^D$ satisfy the conclusion
of~\autoref{thm:1} for $W'_n$ for all $n$ and all partitions $\lambda$ of
$m$ with $0\leq m\leq n-2$.
Note that, in general, the characters $\varphi_\mu^D$
are different from the restrictions of the characters $\varphi_\mu^B$
to the centralizers in $W_n'$.

\section{Exceptional groups}\label{sec:exc}

As in~\cite[\S4]{bishopdouglasspfeifferroehrle:computationsII}, we only need
to consider subsets $L$ of $S$ (up to conjugacy) such that $W_L$ is not
bulky in $W$, $W_L$ has rank at least three, and $W_L$ is not a direct
product of Coxeter groups of type~$A$. The pairs $(W, W_L)$ are given by
type in the following table.
\begin{equation*}
  \begin{array}{c|l}
    \hline W&W_L\\\hline
    E_7&D_4,\,A_1D_4,\,D_5,\,A_1D_5,\,E_6\\
    E_8&D_4,\,A_1D_4,\,D_5,\,A_2D_4,\,A_1D_5,\,D_6,\,E_6,\,A_2D_5,
    \,A_1D_6,D_7\\\hline 
  \end{array}
\end{equation*}

Because of space considerations, we do not list the values of the characters
$\widetilde{\rho_L}= \alpha_L\epsilon \widetilde{\omega_L}$. Instead we list
the characters $\varphi_w$ that satisfy the conclusion of \autoref{thm:1}
for each pair $(W,L)$ when $L$ is a proper subset of $S$ in
\autoref{E7CTable} and \autoref{E8CTable}.  However, see
\cite[\S3.1]{bishopdouglasspfeifferroehrle:computationsII} for an example
with all the data included.

In the tables we exhibit a set of generators of the centralizer of each
cuspidal class representative of $W_L$.  We use the symbol $w_i$ to denote a
representative of the $i^{\text{th}}$ conjugacy class of a group in the list
of conjugacy classes returned by the command {\ttfamily ConjugacyClasses} in
\GAP\ \cite{gap3}.  At each generator of $C_W\left(w_i\right)$ we display the
value of the character $\varphi_{w_i}$, denoted simply by $\varphi_i$. The
symbol $w_0$ represents the longest element of $W$, while $w_L$ represents
the longest element of $W_L$ when $L$ is a proper subset of $S$.  We use the
symbols $1,2,\ldots,n$ to denote the elements of $S$.  For $p\ge 1$ we
denote the $p^\text{th}$ root of unity $e^{2\pi i/p}$ by $\zeta_p$. Finally,
$r$ represents the reflection with respect to the highest long root in the
root system of $W$. We sometimes express generators in terms of longest
elements of certain parabolic subgroups of $W$.  For this purpose, we fix
the following subsets of $S$.
\begin{alignat*}{2}
  E&=\left\{2,3,4,5\right\},\qquad
  &F&=\left\{1,2,3,4,5\right\},\\
  G&=\left\{2,3,4,5,6\right\},
  &H&=\left\{1,2,3,4,5,6\right\},\\
  I&=\left\{2,3,4,5,6,7\right\}, &J&=\left\{2,3,4,5,6,7,8\right\}.
\end{alignat*}

For the subgroup of type~$E_6$ of $W\left(E_8\right)$ we modified the
cuspidal conjugacy class representatives to match those used in
\cite{bishopdouglasspfeifferroehrle:computationsII}.  Namely, we took
$w_{15}=123456$, $w_{14}=24w_{15}$, $w_{12}=13456r_{E_6}$,
$w_{10}=w_{15}^2$, and $w_4=12356r_{E_6}$ where $r_{E_6}$ is the reflection
corresponding with the highest root in the $E_6$ subsystem.  For the
subgroup of type $A_1E_6$ of $W\left(E_8\right)$ the representatives
$\left(w_8,w_{20},w_{24}, w_{28},w_{30}\right)$ were obtained from the
representatives $\left(w_4, w_{10}, w_{12}, w_{14}, w_{15}\right)$ for the
subgroup of type $E_6$ by multiplying by the generator~$8$.

\subsection{Proof of \autoref{thm:1} when \texorpdfstring{$W$}{W} has type
  \texorpdfstring{$E$}{E} and \texorpdfstring{$L=S$}{L=S}}
\label{sec:e2}

To prove~\autoref{thm:1} when $W$ has type $E$ and $L=S$ we proceed as in
the case when $L$ is a proper subset of $S$ except that we use the methods
described in \autoref{sec:state} to compute the Orlik-Solomon character
$\widetilde{\omega_S}=\omega_S$.  Again, we do not list the values of the
characters $\widetilde{\rho_L}= \alpha_L\epsilon \widetilde{\omega_L}$. In
\autoref{E7Table} and \autoref{E8Table} we list the characters $\varphi_d$
that satisfy the conclusion of~\autoref{thm:1} when $L=S$ for the groups of
types~$E_7$ and $E_8$ respectively. Here the conjugacy classes are labeled
by Carter diagrams \cite{carter:conjugacy} and we denote the character
$\varphi_{w_d}$ by $\varphi_d$ where $d$ is a Carter diagram.

As in \cite{bishopdouglasspfeifferroehrle:computations,
  bishopdouglasspfeifferroehrle:computationsII}, we give additional
information about regular conjugacy classes.  If $w$ is in $W$ and $\zeta$
is an eigenvalue of $w$ on $V$, then we denote the determinant of the
representation of $C_W\left(w\right)$ on the $\zeta$-eigenspace of $w$ by
$\det |_\zeta$. By Springer's theory of regular elements
\cite{springer:regular} the centralizer $C_W\left(w\right)$ is a complex
reflection group acting on an eigenspace of $w$ whenever $w$ is a regular
element. In each table we indicate which classes are regular in the column
labeled $\mathrm{Reg}$, which is to be interpreted as follows. If $w$ is
regular and $\varphi_w$ is a power of $\det |_\zeta$ for some $\zeta$ then
we indicate this power in the Reg column.  However, if $w$ is regular but
$\varphi_w$ is {\em not} a power of $\det |_\zeta$ for any $\zeta$, then we
indicate this by writing $\spadesuit$ in the Reg column.  Whenever practical
we describe the structure of $C_W\left(w\right)$ in terms of $Z_m$ and
$S_m$, which denote the cyclic group of size $m$ and the symmetric group on
$m$ letters.

When $C_W\left(w_d\right)$ acts as a complex reflection group on an
eigenspace of $w_d$ we often specify the character $\varphi_d$ by listing
its values on generators of its ``Dynkin diagram'' presentation
\cite{brouemallerouquier:complex}. We also exhibit its Dynkin diagram in
these cases. Note that vertices of the Dynkin diagram connected by single
edges are conjugate in $W$ and thereby take the same character values, which
we list only once.

For the group $W\left(E_8\right)$ we have slightly modified the
representatives of the cuspidal conjugacy classes supplied by
\textsf{GAP}. Namely, we observe that $w_{E_8\left(a_8\right)}$ can be taken
to be $w_0w_{A_2^4}$ and then $C_W\left( w_{E_8\left(a_8\right)} \right)=C_W
\left(w_{A_2^{\smash4}} \right)$.  Similar observations hold for the classes
$A_4^2$ and $E_8\left(a_6\right)$.

\appendix

\section{Tables}\label{TableSection}

\begin{longtable}{ccl}
  \caption{$W=W\left(E_7\right)$, $L\subsetneq S$}\\
  $L$&Type&Characters\\\endfirsthead
  \caption[]{(continued)}\\
  $L$&Type&Characters\\\endhead\toprule $\left\{2,3,4,5\right\}$&$D_4$
  \label{E7CTable}
  &$\varphi_3:\left(2,4,7,w_F,w_G\right)
  \mapsto\left(-1,-1,1,1,1\right)$\\*
  &&$\varphi_9:\left(7,2w_G,245w_H\right)
  \mapsto\left(1,\zeta_4,\zeta_4\right)$\\*
  &&$\varphi_{11}:\left(w_{11},7,2w_F2,w_G\right)
  \mapsto\left(\zeta_3,1,1,1\right)$\\\hline
  $\left\{2,3,4,5,7\right\}$&$A_1D_4$
  &$\varphi_6:\left(3,4,5,7,r,w_F\right)
  \mapsto\left(-1,-1,-1,-1,1,1\right)$\\*
  &&$\varphi_{18}:\left(w_{18},23,r,w_F234\right)
  \mapsto\left(1,-1,1,\zeta_4\right)$\\*
  &&$\varphi_{22}:\left(2345,7,r,2w_F2\right)
  \mapsto\left(\zeta_3,-1,1,1\right)$\\\hline
  $\left\{2,3,4,5,6\right\}$&$D_5$
  &$\varphi_7:\left(w_7,2,3,4,r,w_0\right)\mapsto
  \left(\zeta_4,-1,-1,-1,1,-1\right)$\\*
  &&$\varphi_{15}:\left(w_{15},r,w_0\right)\mapsto
  \left(\zeta_{12},1,-1\right)$\\*
  &&$\varphi_{17}:\left(w_{17},r,w_0\right)\mapsto
  \left(\zeta_8,1,-1\right)$\\\hline
  $\left\{1,2,3,4,5,7\right\}$&$A_1D_5$
  &$\varphi_{14}:\left(w_{14},2,4,5,7,w_0\right)\mapsto
  \left(\zeta_4,-1,-1,-1,-1,1\right)$\\*
  &&$\varphi_{30}:\left(w_{30},7,w_0\right)\mapsto
  \left(\zeta_{12},-1,1\right)$\\*
  &&$\varphi_{34}:\left(w_{34},7,w_0\right)\mapsto
  \left(\zeta_8,-1,1\right)$\\\hline
  $\left\{1,2,3,4,5,6\right\}$&$E_6$
  &$\varphi_4:\left(2343,134256,w_0\right)\mapsto
  \left(\zeta_3,1,1\right)$\\*
  &&$\varphi_{10}:\left(w_{15}^2,123654,w_0\right)
  \mapsto\left(\zeta_3,1,1\right)$\\*
  &&$\varphi_{12}:\left(w_{12},2,4,w_0\right)\mapsto
  \left(\zeta_3,-1,-1,1\right)$\\*
  &&$\varphi_{14}:\left(w_{14},w_0\right)\mapsto
  \left(\zeta_9,1\right)$\\*
  &&$\varphi_{15}:\left(w_{15},w_0\right)\mapsto
  \left(-1,1\right)$\\\toprule
\end{longtable}

\begin{longtable}{ccl}
  \caption{$W=W\left(E_8\right)$, $L\subsetneq S$}\\
  $L$&Type&Characters\\\endfirsthead
  \caption[]{(continued)}\\
  $L$&Type&Characters\\\endhead\toprule
  \label{E8CTable}
  $\left\{2,3,4,5\right\}$&$D_4$
  &$\varphi_{3}:\left(2,4,7,8,w_F,w_G\right)
  \mapsto\left(-1,-1,1,1,1,1\right)$\\*
  &&$\varphi_{9}:\left(7,8,2w_G,254w_F\right)
  \mapsto\left(1,1,\zeta_4,-\zeta_4\right)$\\*
  &&$\varphi_{11}:\left(2345,7,8,25w_F,w_G\right)
  \mapsto\left(\zeta_3,1,1,1,1\right)$\\\hline
  $\left\{2,3,4,5,8\right\}$&$A_1D_4$
  &$\varphi_{6}:\left(3,4,7,w_F,w_J,w_I\right)
  \mapsto\left(-1,-1,-1,1,-1,-1\right)$\\*
  &&$\varphi_{18}:\left(r,254w_F,3w_J\right)
  \mapsto\left(1,\zeta_4,-\zeta_4\right)$\\*
  &&$\varphi_{22}:\left(2345,r,25w_F,w_J\right)
  \mapsto\left(\zeta_3,1,1,-1\right)$\\\hline
  $\left\{1,2,3,4,5\right\}$&$D_5$
  &$\varphi_{7}:\left(2,4,1342543,7,8,w_0,r\right)
  \mapsto\left(-1,-1,\zeta_4,1,1,-1,1\right)$\\*
  &&$\varphi_{15}:\left(1234254,7,8,r,w_0\right)
  \mapsto\left(\zeta_{12},1,1,1,-1\right)$\\*
  &&$\varphi_{17}:\left(13425,7,8,r,w_0\right)
  \mapsto\left(\zeta_8,1,1,1,-1\right)$\\\hline
  $\left\{2,3,4,5,7,8\right\}$&$A_2D_4$
  &$\varphi_{9}:\left(3,4,78,w_F,w_J\right)
  \mapsto\left(-1,-1,\zeta_3,1,1\right)$\\*
  &&$\varphi_{27}:\left(78,254w_F,3w_J\right)
  \mapsto\left(\zeta_3,\zeta_4,\zeta_4\right)$\\*
  &&$\varphi_{33}:\left(2345,78,25w_F,w_J\right)
  \mapsto\left(\zeta_3,\zeta_3,1,1\right)$\\\hline
  $\left\{1,2,3,4,5,7\right\}$&$A_1D_5$
  &$\varphi_{14}:\left(2,4,1342543,7,w_0,r\right)
  \mapsto\left(-1,-1,\zeta_4,-1,1,1\right)$\\*
  &&$\varphi_{30}:\left(1234254,7,w_0,r\right)
  \mapsto\left(\zeta_{12},-1,1,1\right)$\\*
  &&$\varphi_{34}:\left(13425,7,r,w_0\right)
  \mapsto\left(\zeta_8,-1,1,1\right)$\\\hline
  $\left\{2,3,4,5,6,7\right\}$&$D_6$
  &$\varphi_{4}:\left(2,4,5,6,7,r,w_J\right)
  \mapsto\left(-1,-1,-1,-1,-1,1,1\right)$\\*
  &&$\varphi_{14}:\left(2,54234,6576,r,w_J\right)
  \mapsto\left(-1,\zeta_4,-1,1,1\right)$\\*
  &&$\varphi_{21}:\left(2,4,76542345,r,w_J\right)
  \mapsto\left(-1,-1,\zeta_3,1,1\right)$\\*
  &&$\varphi_{27}:\left(543654765,43w_J,r\right)
  \mapsto\left(-1,\zeta_3,1\right)$\\*
  &&$\varphi_{33}:\left(24234567,3w_J,r\right)
  \mapsto\left(\zeta_8,\zeta_4,1\right)$\\*
  &&$\varphi_{35}:\left(234567,r,w_J\right)
  \mapsto\left(\zeta_5,1,1\right)$\\\hline
  $\left\{1,2,3,4,5,6\right\}$&$E_6$
  &$\varphi_{4}:\left(56,14234542,8,r,w_0\right)
  \mapsto\left(\zeta_3,\zeta_3^2,1,1,1\right)$\\*
  &&$\varphi_{10}:\left(w_{15},234543,8,r,w_0\right)
  \mapsto\left(1,\zeta_3,1,1,1\right)$\\*
  &&$\varphi_{12}:\left(w_{12},2345432,r_{E_6},8,r,w_0\right)
  \mapsto\left(\zeta_3,-1,-1,1,1,1\right)$\\*
  &&$\varphi_{14}:\left(w_{14},8,r,w_0\right)
  \mapsto\left(\zeta_9,1,1,1\right)$\\*
  &&$\varphi_{15}:\left(w_{15},8,r,w_0\right)
  \mapsto\left(-1,1,1,1\right)$\\\hline
  $\left\{1,2,3,4,5,7,8\right\}$&$A_2D_5$
  &$\varphi_{21}:\left(2,4,1342543,78,w_0\right)
  \mapsto\left(-1,-1,\zeta_4,\zeta_3,-1\right)$\\*
  &&$\varphi_{45}:\left(1234254,78,w_0\right)
  \mapsto\left(\zeta_{12},\zeta_3,-1\right)$\\*
  &&$\varphi_{51}:\left(13425,78,w_0\right)
  \mapsto\left(\zeta_8,\zeta_3,-1\right)$\\\hline
  $\left\{1,2,3,4,5,6,8\right\}$&$A_1E_6$
  &$\varphi_{8}:\left(56,14234542,8,w_0\right)
  \mapsto\left(\zeta_3,\zeta_3^2,-1,-1\right)$\\*
  &&$\varphi_{20}:\left(w_{30},234543,w_0\right)
  \mapsto\left(-1,\zeta_3,-1\right)$\\*
  &&$\varphi_{24}:\left(w_{24},2345432,r_{E_6},8,w_0\right)
  \mapsto\left(\zeta_6,-1,-1,-1,-1\right)$\\*
  &&$\varphi_{28}:\left(24w_{30},w_0\right)
  \mapsto\left(\zeta_{18},-1\right)$\\*
  &&$\varphi_{30}:\left(w_{30},8,w_0\right)
  \mapsto\left(1,-1,-1\right)$\\\hline
  $\left\{2,3,4,5,6,7,8\right\}$&$D_7$
  &$\varphi_{10}:\left(w_{10},2,3,4,5,6,w_0\right)
  \mapsto\left(\zeta_4,-1,-1,-1,-1,-1,-1\right)$\\*
  &&$\varphi_{20}:\left(234,5465,7687,w_0\right)
  \mapsto\left(\zeta_4,-1,-1,-1\right)$\\*
  &&$\varphi_{31}:\left(w_{31},2,42345,w_0\right)
  \mapsto\left(\zeta_{12},-1,-\zeta_4,-1\right)$\\*
  &&$\varphi_{41}:\left(w_{41},2,3,4,w_0\right)
  \mapsto\left(\zeta_8,-1,-1,-1,-1\right)$\\*
  &&$\varphi_{47}:\left(w_{47},w_0\right)
  \mapsto\left(\zeta_{24},-1\right)$\\*
  &&$\varphi_{52}:\left(w_{52},w_0\right)
  \mapsto\left(\zeta_{20},-1\right)$\\*
  &&$\varphi_{54}:\left(w_{54},w_0\right)
  \mapsto\left(\zeta_{12},-1\right)$\\*\toprule
\end{longtable}

\begin{longtable}[H]{ccccc}
  \caption{$W=W\left(E_7\right)$, $L=S$}\\
  $d$&$C_W\left(w_d\right)$ &Gen&$\varphi_d$&Reg\\\endfirsthead
  \caption[]{(continued)}\\
  $d$&$C_W\left(w_d\right)$ &Gen&$\varphi_d$&Reg\\\endhead\toprule
  \label{E7Table}
  $A_1^7$&$W$&$S$&$\epsilon$&$\det|_{-1}$\\\hline
  $A_1^3D_4$&&$w_{A_1^3D_4}$&$\zeta_6$\\*
  &&$2,4,5,6,7$&$-1$\\\hline $E_7(a_4)$&$G_{26}$&
  $\vcenter{\begin{xy}<.5cm,0cm>:
      (0,0)+<4pt,0pt>;(2,0)-<4pt,0pt>**\dir2{-};
      (2,0)+<4pt,0pt>;(4,0)-<4pt,0pt>**\dir{-};
      (0,0)*{_2}*\xycircle<4pt>{-}; (2,0)*{_3}*\xycircle<4pt>{-};
      (4,0)*{_3}*\xycircle<4pt>{-};
    \end{xy}}$
  &$-1,\zeta_3,\zeta_3$&$\spadesuit$\\\hline
  $A_1D_6(a_2)$&&$w_{A_1D_6(a_2)}$&$-\zeta_3$\\*
  &&$2$&$-1$\\*
  &&$1342654231456$&$-1$\\*
  &&$\left(76\right)^{542345}$&$\zeta_3$\\\hline
  $A_1A_3^2$&&$w_{E_7\left(a_2\right)}$&$1$\\*
  &&$42543$&$\zeta_4$\\\hline
  $A_1D_6$&&$w_{A_1D_6}$&$\zeta_{10}$\\*
  &&$2$&$-1$\\*
  &&$4$&$-1$\\\hline
  $A_2A_5$&&$w_{A_2A_5}$&$-1$\\*
  &&$34$&$\zeta_3$\\*
  &&$2345$&$\zeta_3^2$\\\hline $E_7(a_1)$&$Z_{14}$
  &$\vcenter{\begin{xy}<.5cm,0cm>: (0,0)*{_{14}}*\xycircle<5pt>{-};
    \end{xy}}$
  &$\zeta_{14}$&$\det\vert_{\zeta_{14}}$\\\hline
  $A_7$&$Z_8\times Z_2\times Z_2$&$w_{A_7}$&$\zeta_8$\\*
  &&$w_0$&$-1$\\*
  &&$2$&$-1$\\\hline $E_7$&$Z_{18}$ &$\vcenter{\begin{xy}<.5cm,0cm>:
      (0,0)*{_{18}}*\xycircle<5pt>{-};
    \end{xy}}$
  &$\zeta_{18}$&$\det\vert_{\zeta_{18}}$\\\hline
  $E_7(a_2)$&$Z_{12}\times Z_2$&$w_{E_7\left(a_2\right)}$&$1$\\*
  &&$w_0$&$-1$\\\hline
  $E_7(a_3)$&$Z_{30}$&$w_{E_7\left(a_3\right)}$&$\zeta_{30}$\\\toprule
\end{longtable}

\begin{longtable}[H]{ccccc}
  \caption{$W=W\left(E_8\right)$, $L=S$}\\
  $d$&$C_W\left(w_d\right)$ &Gen&$\varphi_d$&Reg\\\endfirsthead
  \caption[]{(continued)}\\
  $d$&$C_W\left(w_d\right)$ &Gen&$\varphi_d$&Reg\\\endhead\toprule
  \label{E8Table}
  $A_1^8$&$W$&$S$&$\epsilon$&$\det|_{-1}$\\\hline $D_4(a_1)^2$&$G_{31}$
  &$\vcenter{\begin{xy}<.5cm,0cm>:
      (0,0)+<4pt,0pt>;(2,0)-<4pt,0pt>**\dir{-};
      (0,0)+<3pt,3pt>;(1,1)-<3pt,3pt>**\dir{-};
      (2,0)+<4pt,0pt>;(4,0)-<4pt,0pt>**\dir{-};
      (4,0)+<-3pt,3pt>;(3,1)+<3pt,-3pt>**\dir{-};
      (0,0)*{_2}*\xycircle<4pt>{-}; (2,0)*{_2}*\xycircle<4pt>{-};
      (1,1)*{_2}*\xycircle<4pt>{-}; (4,0)*{_2}*\xycircle<4pt>{-};
      (3,1)*{_2}*\xycircle<4pt>{-}; (2,1)*\xycircle<10pt>{-};
    \end{xy}}$ &$-1$&$\spadesuit$\\\hline
  $A_1^4D_4$&&$1$&$-1$\\*
  &&$w_{A_1^4D_4}$&$\zeta_3$\\*
  &&$w_{A_1E_7(a_4)}$&$\zeta_3^2$\\*
  &&$w_{A_3D_5(a_1)}$&$\zeta_3$\\\hline $A_2^4$&$G_{32}$&
  $\vcenter{\begin{xy}<.5cm,0cm>:
      (0,0)+<4pt,0pt>;(2,0)-<4pt,0pt>**\dir{-};
      (2,0)+<4pt,0pt>;(4,0)-<4pt,0pt>**\dir{-};
      (4,0)+<4pt,0pt>;(6,0)-<4pt,0pt>**\dir{-};
      (0,0)*{_3}*\xycircle<4pt>{-}; (2,0)*{_3}*\xycircle<4pt>{-};
      (4,0)*{_3}*\xycircle<4pt>{-}; (6,0)*{_3}*\xycircle<4pt>{-}
    \end{xy}}$&$\zeta_3$&$\spadesuit$\\\hline $E_8(a_8)$&$G_{32}$&
  $\vcenter{\begin{xy}<.5cm,0cm>:
      (0,0)+<4pt,0pt>;(2,0)-<4pt,0pt>**\dir{-};
      (2,0)+<4pt,0pt>;(4,0)-<4pt,0pt>**\dir{-};
      (4,0)+<4pt,0pt>;(6,0)-<4pt,0pt>**\dir{-};
      (0,0)*{_3}*\xycircle<4pt>{-}; (2,0)*{_3}*\xycircle<4pt>{-};
      (4,0)*{_3}*\xycircle<4pt>{-}; (6,0)*{_3}*\xycircle<4pt>{-}
    \end{xy}}$&$\zeta_3$&$\spadesuit$\\\hline
  $A_1E_7(a_4)$&&$2$&$-1$\\*
  &&$w_{A_1^4D_4}$&$\zeta_3$\\*
  &&\footnote[2]{$y=13427654234567876$}
  $y$&$-\zeta_3$\\\hline
  $D_4^2$&&$2$&$-1$\\*
  &&$w_E$&$1$\\*
  &&\footnote[3]{$z=1342543654276548765$} 
  $z$&$\zeta_6$\\\hline
  $A_1^2A_3^2$&&$w_{A_2E_6(a_2)}$&$1$\\*
  &&$5$&$-1$\\*
  &&$7w_G$&$\zeta_4$\\\hline $D_8(a_3)$&$G_9$&
  $\vcenter{\begin{xy}<.5cm,0cm>:
      (0,0)+<4pt,0pt>;(2,0)-<4pt,0pt>**\dir3{-};
      (0,0)*{_4}*\xycircle<4pt>{-}; (2,0)*{_2}*\xycircle<4pt>{-}
    \end{xy}}$&$\zeta_4,-1$&$\det|_{\zeta_8}$\\\hline
  $A_1^2D_6$&$Z_{10}\times S_5$&$w_{A_1^2D_6}$&$\zeta_5$\\*
  &&$3,4,5,6$&$-1$\\\hline $A_4^2$&$G_{16}$&
  $\vcenter{\begin{xy}<.5cm,0cm>:
      (0,0)+<4pt,0pt>;(2,0)-<4pt,0pt>**\dir{-};
      (0,0)*{_5}*\xycircle<4pt>{-}; (2,0)*{_5}*\xycircle<4pt>{-}
    \end{xy}}$&$\zeta_5$&$\det|_{\zeta_5}$\\\hline
  $E_8(a_6)$&$G_{16}$& $\vcenter{\begin{xy}<.5cm,0cm>:
      (0,0)+<4pt,0pt>;(2,0)-<4pt,0pt>**\dir{-};
      (0,0)*{_5}*\xycircle<4pt>{-}; (2,0)*{_5}*\xycircle<4pt>{-}
    \end{xy}}$&$\zeta_5$&$\det|_{\zeta_{10}}$\\\hline
  $A_2E_6(a_2)$&&$w_{A_2E_6(a_2)}$&$\zeta_3$\\*
  &&$w_{A_1^2A_3^2}$&$1$\\*
  &&$24$&$\zeta_3$\\*
  &&$2345$&$\zeta_3^2$\\*
  &&$87w_I$&$\zeta_3$\\\hline $E_8(a_3)$&$G_{10}$&
  $\vcenter{\begin{xy}<.5cm,0cm>:
      (0,0)+<4pt,0pt>;(2,0)-<4pt,0pt>**\dir{=};
      (0,0)*{_4}*\xycircle<4pt>{-}; (2,0)*{_3}*\xycircle<4pt>{-}
    \end{xy}}$&$-1,\zeta_3$&$\left(\det|_{\zeta_{12}}\right)^2$\\\hline
  $A_1A_2A_5$&&$w_{A_1A_2A_5}$&$1$\\*
  &&$7$&$-1$\\*
  &&$8$&$-1$\\*
  &&$34$&$\zeta_3$\\*
  &&$2345$&$\zeta_3^2$\\\hline
  $D_8(a_1)$&$Z_{12}\times S_3$&$w_{D_8(a_1)}$&$\zeta_3$\\*
  &&$48$&$-1$\\*
  &&$4578$&$1$\\\hline
  $D_8$&$Z_{14}\times Z_2$&$w_{D_8}$&$\zeta_7$\\*
  &&$2$&$-1$\\\hline
  $A_1A_7$&&$w_{A_1A_7}$&$\zeta_8$\\*
  &&$2$&$-1$\\*
  &&$34254$&$\zeta_4$\\\hline
  $A_1E_7$&&$w_{A_1E_7}$&$\zeta_9$\\*
  &&$3$&$-1$\\*
  &&$4$&$-1$\\\hline
  $A_8$&$Z_{18}\times Z_3$&$w_{A_8}$&$\zeta_9$\\*
  &&$13w_0$&$\zeta_3$\\\hline
  $E_8(a_4)$&$Z_{18}\times Z_3$&$w_{E_8(a_4)}$&$\zeta_9$\\*
  &&$34$&$\zeta_3$\\\hline $E_8(a_2)$&$Z_{20}$
  &$\vcenter{\begin{xy}<.5cm,0cm>: (0,0)*{_{20}}*\xycircle<5pt>{-};
    \end{xy}}$
  &$\zeta_5$&$\left(\det|_{\zeta_{20}}\right)^4$\\\hline
  $A_3D_5(a_1)$&&$w_{A_3D_5(a_1)}$&$\zeta_6$\\*
  &&$354$&$\zeta_4$\\*
  &&$136542$&$-1$\\\hline
  $A_2E_6$&&$w_{A_2E_6}$&$\zeta_6$\\*
  &&$34$&$\zeta_3$\\*
  &&$2435$&$\zeta_3$\\\hline
  $E_8(a_7)$&&$w_{E_8(a_7)}$&$\zeta_6$\\*
  &&$2354$&$\zeta_3$\\*
  &&$2454$&$\zeta_3$\\\hline
  $A_1E_7(a_2)$&&$w_{A_1E_7(a_2)}$&$-1$\\*
  &&$2,4,w_0$&$-1$\\\hline $E_8(a_1)$&$Z_{24}$
  &$\vcenter{\begin{xy}<.5cm,0cm>: (0,0)*{_{24}}*\xycircle<5pt>{-};
    \end{xy}}$ &$\zeta_{12}$&$\left(\det|_{\zeta_{24}}\right)^2$\\\hline
  $D_8(a_2)$&$Z_{30}\times Z_2$&$w_{D_8(a_2)}$&$\zeta_{15}$\\*
  &&$5$&$-1$\\\hline $E_8(a_5)$&$Z_{30}$
  &$\vcenter{\begin{xy}<.5cm,0cm>: (0,0)*{_{30}}*\xycircle<5pt>{-};
    \end{xy}}$ &$\zeta_{15}$&$\left(\det|_{\zeta_{15}}\right)^2$\\\hline
  $E_8$&$Z_{30}$
  &$\vcenter{\begin{xy}<.5cm,0cm>: (0,0)*{_{30}}*\xycircle<5pt>{-};
    \end{xy}}$ &$\zeta_{15}$&$\left(\det|_{\zeta_{30}}\right)^2$\\\toprule
\end{longtable}

\bigskip 

{\bf Acknowledgments}: The authors would like to acknowledge support from
the DFG-priority program SPP1489 {\em Algorithmic and Experimental Methods
  in Algebra, Geometry, and Number Theory.} This work was partially
supported by a grant from the Simons Foundation (Grant \#245399 to
J. Matthew Douglass).

\bigskip


\bibliographystyle{plainnat}

\begin{thebibliography}{19}
\providecommand{\natexlab}[1]{#1}
\providecommand{\url}[1]{\texttt{#1}}
\expandafter\ifx\csname urlstyle\endcsname\relax
  \providecommand{\doi}[1]{doi: #1}\else
  \providecommand{\doi}{doi: \begingroup \urlstyle{rm}\Url}\fi

\bibitem[Bergeron et~al.(1992)Bergeron, Bergeron, Howlett, and
  Taylor]{bergeronbergeronhowletttaylor:decomposition}
F.~Bergeron, N.~Bergeron, R.B. Howlett, and D.E. Taylor.
\newblock A decomposition of the descent algebra of a finite {C}oxeter group.
\newblock \emph{J. Algebraic Combin.}, 1\penalty0 (1):\penalty0 23--44, 1992.

\bibitem[Bishop et~al.(2013{\natexlab{a}})Bishop, Douglass, Pfeiffer, and
  R{\"o}hrle]{bishopdouglasspfeifferroehrle:computations}
M.~Bishop, J.M. Douglass, G.~Pfeiffer, and G.~R{\"o}hrle.
\newblock Computations for {C}oxeter arrangements and {S}olomon's descent
  algebra: groups of rank three and four.
\newblock \emph{J. Symbolic Comput.}, 50:\penalty0 139--158,
  2013{\natexlab{a}}.

\bibitem[Bishop et~al.(2013{\natexlab{b}})Bishop, Douglass, Pfeiffer, and
  R{\"o}hrle]{bishopdouglasspfeifferroehrle:computationsII}
M.~Bishop, J.M. Douglass, G.~Pfeiffer, and G.~R{\"o}hrle.
\newblock Computations for {C}oxeter arrangements and {S}olomon's descent
  algebra {II}: groups of rank five and six.
\newblock \emph{J. Algebra}, 377:\penalty0 320--332, 2013{\natexlab{b}}.

\bibitem[Bonnaf{\'e}(2008)]{bonnafe}
C.~Bonnaf{\'e}.
\newblock Representation theory of Mantaci-Reutenauer algebras. 
\newblock \emph{Algebr. Represent. Theory}, 11:\penalty0 307--346, 2008.


\bibitem[Brou{\'e} et~al.(1998)Brou{\'e}, Malle, and
  Rouquier]{brouemallerouquier:complex}
M.~Brou{\'e}, G.~Malle, and R.~Rouquier.
\newblock Complex reflection groups, braid groups, {H}ecke algebras.
\newblock \emph{J. Reine Angew. Math.}, 500:\penalty0 127--190, 1998.

\bibitem[Carter(1972)]{carter:conjugacy}
R.W. Carter.
\newblock Conjugacy classes in the {W}eyl group.
\newblock \emph{Compositio Math.}, 25:\penalty0 1--59, 1972.

\bibitem[Douglass et~al.()Douglass, Pfeiffer, and
  R{\"o}hrle]{douglasspfeifferroehrle:cohomology}
J.M. Douglass, G.~Pfeiffer, and G.~R{\"o}hrle.
\newblock Cohomology of {C}oxeter arrangements and {S}olomon's descent algebra.
\newblock 
\emph{ Trans. Amer. Math. Soc.}, 366\penalty0 (10):\penalty0 5379--5407, 2014.

\bibitem[Douglass et~al.(2012)Douglass, Pfeiffer, and
  R{\"o}hrle]{douglasspfeifferroehrle:inductive}
J.M. Douglass, G.~Pfeiffer, and G.~R{\"o}hrle.
\newblock An inductive approach to {C}oxeter arrangements and {S}olomon's
  descent algebra.
\newblock \emph{J. Algebraic Combin.}, 35\penalty0 (2):\penalty0 215--235,
  2012.

\bibitem[Fleischmann and Janiszczak(1991)]{fleischmannjaniszczak:lattices}
P.~Fleischmann and I.~Janiszczak.
\newblock The lattices and {M}\"obius functions of stable closed subrootsystems
  and hyperplane complements for classical {W}eyl groups.
\newblock \emph{Manuscripta Math.}, 72\penalty0 (4):\penalty0 375--403, 1991.

\bibitem[Fleischmann and Janiszczak(1993)]{fleischmannjaniszczak:combinatorics}
P.~Fleischmann and I.~Janiszczak.
\newblock Combinatorics and {P}oincar\'e polynomials of hyperplane complements
  for exceptional {W}eyl groups.
\newblock \emph{J. Combin. Theory Ser. A}, 63\penalty0 (2):\penalty0 257--274,
  1993.

\bibitem[Geck and Pfeiffer(2000)]{geckpfeiffer:characters}
M.~Geck and G.~Pfeiffer.
\newblock \emph{Characters of finite {C}oxeter groups and {I}wahori-{H}ecke
  algebras}, volume~21 of \emph{London Mathematical Society Monographs. New
  Series}.
\newblock The Clarendon Press,` Oxford University Press, New York, 2000.

\bibitem[Geck et~al.(1996)Geck, Hi{\ss}, L{\"u}beck, Malle, and
  Pfeiffer]{chevie}
M.~Geck, G.~Hi{\ss}, F.~L{\"u}beck, G.~Malle, and G.~Pfeiffer.
\newblock {CHEVIE} --- {A} system for computing and processing generic
  character tables.
\newblock \emph{Appl. Algebra Engrg. Comm. Comput.}, 7:\penalty0 175--210,
  1996.

\bibitem[Konvalinka et~al.(2011)Konvalinka, Pfeiffer, and
  R{\"o}ver]{konvalinkapfeifferroever:centralizers}
M.~Konvalinka, G.~Pfeiffer, and C.~R{\"o}ver.
\newblock A note on element centralizers in finite {C}oxeter groups.
\newblock \emph{J. Group Theory}, 14:\penalty0 727--745, 2011.

\bibitem[Lehrer(1987{\natexlab{a}})]{lehrer:hyperoctahedral}
G.~I. Lehrer.
\newblock On hyperoctahedral hyperplane complements.
\newblock In \emph{The Arcata Conference on Representations of Finite Groups
  (Arcata, Calif., 1986)}, volume~47 of \emph{Proc. Sympos. Pure Math.}, pages
  219--234. Amer. Math. Soc., Providence, RI, 1987{\natexlab{a}}.

\bibitem[Lehrer(1987{\natexlab{b}})]{lehrer:poincare}
G.I. Lehrer.
\newblock On the {P}oincar\'e series associated with {C}oxeter group actions on
  complements of hyperplanes.
\newblock \emph{J. London Math. Soc. (2)}, 36\penalty0 (2):\penalty0 275--294,
  1987{\natexlab{b}}.

\bibitem[Lehrer and Solomon(1986)]{lehrersolomon:symmetric}
G.I. Lehrer and L.~Solomon.
\newblock On the action of the symmetric group on the cohomology of the
  complement of its reflecting hyperplanes.
\newblock \emph{J. Algebra}, 104\penalty0 (2):\penalty0 410--424, 1986.

\bibitem[Orlik and Solomon(1980)]{orliksolomon:combinatorics}
P.~Orlik and L.~Solomon.
\newblock Combinatorics and topology of complements of hyperplanes.
\newblock \emph{Invent. Math.}, 56\penalty0 (2):\penalty0 167--189, 1980.

\bibitem[Pfeiffer(2007)]{zigzag}
G.~Pfeiffer.
\newblock {ZigZag} --- a {GAP3} package for descent algebras of finite {Coxeter}
  groups.
\newblock 2007.
\newblock electronically available at
  \texttt{http://schmidt.nuigalway.ie/zigzag}.

\bibitem[Sch{\accent127 o}nert et~al.(1997)]{gap3}
M.~Sch{\accent127 o}nert et~al.
\newblock \emph{{GAP} -- {Groups}, {Algorithms}, and {Programming} -- version 3
  release 4}.
\newblock Lehrstuhl D f{\accent127 u}r Mathematik, Rheinisch Westf{\accent127
  a}lische Technische Hoch\-schule, Aachen, Germany, 1997.

\bibitem[Springer(1974)]{springer:regular}
T.A. Springer.
\newblock Regular elements of finite reflection groups.
\newblock \emph{Invent. Math.}, 25:\penalty0 159--198, 1974.

\end{thebibliography}

\end{document}